
\documentclass[12pt]{amsart}%
\usepackage{amsmath}
\usepackage{amsfonts}
\usepackage{amssymb}
\usepackage{amstext}
\usepackage{graphicx}%
\setcounter{MaxMatrixCols}{30}
\providecommand{\U}[1]{\protect \rule{.1in}{.1in}}
\providecommand{\U}[1]{\protect \rule{.1in}{.1in}}
\providecommand{\U}[1]{\protect \rule{.1in}{.1in}}
\marginparwidth 0pt   \marginparsep 0pt
\oddsidemargin -0.1in \evensidemargin 0pt
\topmargin -.3in
\textwidth 6.5in
\textheight 8.5in

\newtheorem{theorem}{Theorem}[section]
\newtheorem{lemma}{Lemma}[section]
\newtheorem{proposition}{Proposition}[section]
\newtheorem{corollary}{Corollary}[section]

\newtheorem{definition}{Definition}[section]

\numberwithin{equation}{section}

\theoremstyle{remark}
\newtheorem{remark}{Remark}[section]
\numberwithin{equation}{section}
\begin{document}
\title[CR three circle theorem]{On the three-circle theorem and its applications in Sasakian manifolds}
\author{$^{\ast}$Shu-Cheng Chang}
\address{Department of Mathematics and Taida Institute for Mathematical Sciences
(TIMS), National Taiwan University, Taipei 10617, Taiwan\\
Current address : Yau Mathematical Sciences Center, Tsinghua University,
Beijing, China}
\email{scchang@math.ntu.edu.tw }
\author{$^{\dag}$Yingbo Han}
\address{{School of Mathematics and Statistics, Xinyang Normal University}\\
Xinyang,464000, Henan, China}
\email{{yingbohan@163.com}}
\author{$^{\ast \ast}$Chien Lin}
\address{Department of Mathematics, National Tsing Hua University, Hsinchu 30013, Taiwan}
\email{r97221009@ntu.edu.tw}
\thanks{$^{\ast}$Shu-Cheng Chang and $^{\ast \ast}$Chien Lin are partially supported in
part by the MOST of Taiwan.}
\thanks{$^{\dag}$Yingbo Han is partially supported by an NSFC grant No. 11201400 and
Nanhu Scholars Program for Young Scholars of {Xinyang Normal University}.}
\subjclass{Primary 32V05, 32V20; Secondary 53C56.}
\keywords{CR three circle theorem, CR Yau's uniformization conjecture, CR holomorphic
function, Sharp dimension estimate, CR sub-Laplacian comparison, Liouville theorem.}

\begin{abstract}
This paper mainly focuses on the CR analogue of the three-circle theorem in a
complete noncompact pseudohermitian manifold of vanishing torsion being odd
dimensional counterpart of K\"{a}hler geometry. In this paper, we show that
the CR three-circle theorem holds if its pseudohermitian sectional curvature
is nonnegative. As an application, we confirm the first CR Yau's
uniformization conjecture and obtain the CR analogue of the sharp dimension
estimate for CR holomorphic functions of polynomial growth and its rigidity
when the pseudohermitian sectional curvature is nonnegative. This is also the
first step toward second and third CR Yau's uniformization conjecture.
Moreover, in the course of the proof of the CR three-circle theorem, we derive
CR sub-Laplacian comparison theorem. Then Liouville theorem holds for positive
pseudoharmonic functions in a complete noncompact pseudohermitian
$(2n+1)$-manifold of vanishing torsion and nonnegative pseudohermitian Ricci curvature.

\end{abstract}
\maketitle

\section{Introduction}

In 1896, J. Hadamard (\cite{ha}) published the so-called classical
three-circle theorem which says that, on the annulus $A$ with inner radius
$r_{1}$ and outer radius $r_{2}$, the logarithm for the modulus of a
holomorphic function on the closure $\overline{A}$ of the annulus is convex
with respect to $\log r$ for $r$ lying between $r_{1}$ and $r_{2}$. Recently,
G. Liu (\cite{liu1}) generalized the three-circle theorem to complete
K\"{a}hler manifolds and characterized a necessary and sufficient condition
for the three-circle theorem. Here a complete K\"{a}hler manifold $M$ is said
to satisfy the three-circle theorem if, for any point $p\in M$, $R>0$, any
holomorphic function $f$ on the geodesic ball $B\left(  p,R\right)  $, $\log
M_{f}\left(  r\right)  $ is convex with respect to $\log r$, namely, for
$0<r_{1}\leq r_{2}\leq r_{3}<R$,
\begin{equation}
\log \left(  \frac{r_{3}}{r_{1}}\right)  \log M_{f}\left(  r_{2}\right)
\leq \log \left(  \frac{r_{3}}{r_{2}}\right)  \log M_{f}\left(  r_{1}\right)
+\log \left(  \frac{r_{2}}{r_{1}}\right)  \log M_{f}\left(  r_{3}\right)
\label{101}%
\end{equation}
where $M_{f}\left(  r\right)  =\underset{x\in B\left(  p,r\right)  }{\sup
}\left \vert f\left(  x\right)  \right \vert $. More precisely, he showed that a
complete K\"{a}hler manifold satisfies the three-circle theorem if and only if
its holomorphic sectional curvature is nonnegative. The proof employed the
Hessian comparison and the maximum principle. There are many substantial
applications pertaining to the three-circle theorem, especially to the
uniformization-type problems proposed by Yau (\cite{scy}) on complete
K\"{a}hler manifolds with nonnegative holomorphic bisectional curvature. It
could be summarized as follows. The first Yau's uniformization conjecture is
that if $M$ is a complete noncompact $n$-dimensional K\"{a}hler manifold with
nonnegative holomorphic bisectional curvature, then
\[
\dim_{%
\mathbb{C}
}\left(  \mathcal{O}_{d}\left(  M^{n}\right)  \right)  \leq \dim_{%
\mathbb{C}
}\left(  \mathcal{O}_{d}\left(
\mathbb{C}
^{n}\right)  \right)  .
\]
The equality holds if and only if $M$ is isometrically biholomorphic to $%
\mathbb{C}
^{n}$. Here $\mathcal{O}_{d}\left(  M^{n}\right)  $ denotes the family of all
holomorphic functions on a complete $n$-dimensional K\"{a}hler manifold $M$ of
polynomial growth of degree at most $d$. In \cite{n}, Ni established the
validity of this conjecture by deriving the monotonicity formula for the heat
equation under the assumption that $M$ has maximal volume growth
\[
\lim_{r\rightarrow+\infty}\frac{Vol\left(  B_{p}\left(  r\right)  \right)
}{r^{2n}}\geq c
\]
for a fixed point $p$ and a positive constant $c$. Later, in \cite{cfyz}, the
authors improved Ni's result without the assumption of maximal volume growth.
In recent years, G. Liu (\cite{liu1}) generalized the sharp dimension estimate
by only assuming that $M$ admits nonnegative holomorphic sectional curvature.
Note that there are noncompact complex manifolds admitting complete K\"{a}hler
metrics with positive holomorphic sectional curvature but not admitting
complete K\"{a}hler metrics with nonnegative Ricci curvature (see \cite{hi}).

The second Yau's uniformization conjecture is that if $M$ is a complete
noncompact $n$-dimensional K\"{a}hler manifold with nonnegative holomorphic
bisectional curvature, then the ring $\mathcal{O}_{P}\left(  M\right)  $ of
all holomorphic functions of polynomial growth is finitely generated. This one
was solved completely by G. Liu (\cite{liu2}) quite recently. He mainly
deployed four techniques to attack this conjecture via Cheeger-Colding-Tian's
theory (\cite{chco1}, \cite{chco2}, \cite{cct}), methods of heat flow
developed by Ni and Tam (\cite{n}, \cite{nt1}, \cite{nt4}), H\"{o}rmander
$L^{2}$-estimate of $\overline{\partial}$ (\cite{de}) and three-circle theorem
(\cite{liu1}) as well.

The third Yau's uniformization conjecture is that if $M$ is a complete
noncompact $n$-dimensional K\"{a}hler manifold with positive holomorphic
bisectional curvature, then $M$ is biholomorphic to the standard
$n$-dimensional complex Euclidean space $%
\mathbb{C}
^{n}$. The first giant progress relating to the third conjecture could be
attributed to Mok, Siu and Yau. In their papers (\cite{msy} and \cite{m1}),
they showed that, under the assumptions of the maximal volume growth and the
scalar curvature $R\left(  x\right)  $ decays as
\[
0\leq R\left(  x\right)  \leq \frac{C}{\left(  1+d\left(  x,x_{0}\right)
\right)  ^{2+\epsilon}}%
\]
for some positive constant $C$ and any arbitrarily small positive number
$\epsilon$, a complete noncompact $n$-dimensional K\"{a}hler manifold $M$ with
nonnegative holomorphic bisectional curvature is isometrically biholomorphic
to $%
\mathbb{C}
^{n}$. A Riemannian version was solved in \cite{gw2} shortly afterwards. Since
then there are several further works aiming to prove the optimal result and
reader is referred to \cite{m2}, \cite{ctz}, \cite{cz}, \cite{n2}, \cite{nt1},
\cite{nt2} and \cite{nst}. For example, A. Chau and L. F. Tam (\cite{ct})
proved that a complete noncompact K\"{a}hler manifold with bounded nonnegative
holomorphic bisectional curvature and maximal volume growth is biholomorphic
to $%
\mathbb{C}
^{n}$. Recently, G. Liu (\cite{liu3}) confirmed Yau's uniformization
conjecture when $M$ has maximal volume growth. Later, M.-C. Lee and L.-F. Tam
(\cite{lt}) also confirmed Yau's uniformization conjecture with the maximal
volume growth condition.

For the corresponding first uniformization conjectures in a complete
noncompact pseudohermitian $(2n+1)$-manifold of vanishing torsion (i.e.
Sasakian manifold) which is an odd dimensional counterpart of K\"{a}hler
geometry (see the next section for its definition and some properties), it was
settled that the CR sharp dimension estimate for CR holomorphic functions of
polynomial growth with nonnegative pseudohermitian bisectional curvature in
\cite{chl} of which proof is inspired primarily from \cite{n} and \cite{cfyz}.
So it's natural to concern whether the second and third CR Yau's
uniformization conjectures hold as well. However, as inspired by recent works
of G. Liu (\cite{liu2}, \cite{liu3}), it is crucial to have the CR analogue of
the three-circle theorem which is a step towards establishing the validity of
such CR Yau's uniformization conjectures.

In this paper, we mainly focus on the CR three-circle theorem in a complete
noncompact pseudohermitian $\left(  2n+1\right)  $-manifold of vanishing
torsion with nonnegative pseudohermitian sectional curvature which is weak
than nonnegative pseudohermitian bisectional curvature.

A smooth complex-valued function on a pseudohermitian $\left(  2n+1\right)
$-manifold $\left(  M,J,\theta \right)  $ is called CR-holomorphic if%
\[
\overline{\partial}_{b}f=0.
\]

We recall $\mathcal{O}^{CR}(M)$ the family of all CR-holomorphic functions $f$
with $Tf(x)=f_{0}(x)=0$ (\cite{chl})
\[
\mathcal{O}^{CR}(M)=\{f(x)\in C_{%
\mathbb{C}
}^{\infty}\left(  M\right)  \ |\overline{\partial}_{b}f(x)=0\text{
}\mathrm{and\ }\ f_{0}(x)=0\text{ }\  \},
\]
where the extra condition $Tf(x)=0$ is included, the interested readers could
refer to \cite{chl} or \cite{fow}.

Next we give the definition of the CR three-circle theorem generalizing the
classical Hadamard's three-circle theorem to CR manifolds :

\begin{definition}
Let $\left(  M,J,\theta \right)  $ be a complete pseudohermitian $\left(
2n+1\right)  $-manifold. $\left(  M,J,\theta \right)  $ is said to satisfy the
CR three-circle theorem if, for any point $p\in M$, any positive number $R>0$,
and any function $f\in \mathcal{O}^{CR}(B_{cc}\left(  p,R\right)  )$ on the
ball $B_{cc}\left(  p,R\right)  $, $\log M_{f}\left(  r\right)  $ is convex
with respect to $\log r$ for $0<r<R$. Here $M_{f}\left(  r\right)
=\underset{x\in B_{cc}\left(  p,r\right)  }{\sup}\left \vert f\left(  x\right)
\right \vert $ and $B_{cc}\left(  p,R\right)  $ is the Carnot-Carath\'{e}odory
ball centered at $p$ with radius $R$.
\end{definition}

Now we state our main theorem in this paper as follows:

\begin{theorem}
\label{m302}If $\left(  M,J,\theta \right)  $ is a complete noncompact
pseudohermitian $\left(  2n+1\right)  $-manifold with vanishing torsion, then
the CR three-circle theorem holds on $M$ if the pseudohermitian sectional
curvature is nonnegative; moreover, we have that, for any $f\in \mathcal{O}%
^{CR}(M)$,
\begin{equation}
\frac{M_{f}\left(  kr\right)  }{M_{f}\left(  r\right)  } \label{314}%
\end{equation}
is increasing with respect to $r$ for any positive number $k\geq1$.
\end{theorem}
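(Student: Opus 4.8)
The plan is to mirror G. Liu's strategy in the Kähler case, adapted to the subelliptic geometry. The central analytic input is a \emph{CR Hessian comparison} for the Carnot--Carath\'eodory distance function $r(x)=d_{cc}(p,x)$: under vanishing torsion and nonnegative pseudohermitian sectional curvature, the complex Hessian $r_{\alpha\overline\beta}$ (plus the relevant horizontal first-order terms coming from the contact structure) is dominated by the corresponding quantity on the flat Heisenberg group $\mathbf H^n$, exactly where the hypothesis on pseudohermitian sectional curvature (weaker than bisectional) is used. From this one extracts the \emph{CR sub-Laplacian comparison} $\Delta_b r \le \frac{Q-1}{r}$ (with $Q=2n+2$ the homogeneous dimension, or $2n/r$ in the appropriate normalization) that the abstract promises as a by-product; this in turn forces $\log r$ to be $\Delta_b$-subharmonic outside $p$.

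Next I would reduce the convexity of $\log M_f(\log r)$ to a maximum-principle argument on annuli. Fix $0<r_1\le r_2\le r_3<R$ and let $h(r)$ be the linear-in-$\log r$ function through the points $(\log r_i,\log M_f(r_i))$ for $i=1,3$. On the Carnot--Carath\'eodory annulus $A(r_1,r_3)$ consider $u=\log|f|-h(r)$. The key observations are: (i) $\log|f|$ is pseudo-subharmonic, i.e. $\Delta_b\log|f|\ge 0$ wherever $f\ne 0$, because $f$ is CR-holomorphic with $f_0=0$ — this is the CR analogue of plurisubharmonicity of $\log|f|$ and should follow from the CR Bochner/commutation formulas together with $\overline\partial_b f=0$ and $Tf=0$; (ii) $h(r)=a\log r+b$ is $\Delta_b$-subharmonic on the annulus precisely because $\Delta_b\log r\ge 0$ there and the slope $a=\frac{\log M_f(r_3)-\log M_f(r_1)}{\log(r_3/r_1)}\ge 0$ (one checks $M_f$ is nondecreasing, so $a\ge0$). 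Hence $-h(r)$ is \emph{super}harmonic and $u=\log|f|-h$ satisfies $\Delta_b u\ge 0$ on $\{f\ne0\}\cap A$; at points where $f=0$, $\log|f|=-\infty$ causes no trouble for the maximum principle. On $\partial A$ we have $u\le 0$ by construction of $h$ at $r_1,r_3$ (using that $|f|\le M_f(r_i)$ on $\partial B_{cc}(p,r_i)$). Therefore $u\le 0$ on all of $A$, which gives $\log M_f(r_2)\le h(r_2)$, i.e. exactly inequality~(\ref{101}) in the CR setting; this is the convexity of $\log M_f$ in $\log r$.

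The monotonicity statement~(\ref{314}) then follows formally: convexity of the function $\varphi(t):=\log M_f(e^t)$ in $t$ implies that for fixed $s=\log k\ge 0$ the difference quotient $\varphi(t+s)-\varphi(t)$ is nondecreasing in $t$, which is precisely the assertion that $M_f(kr)/M_f(r)$ is increasing in $r$; one should also note $\varphi$ is finite for all $r<R$ as soon as $f\not\equiv 0$ and handle the trivial case $f\equiv0$ separately. A small technical point is the lack of smoothness of $r$ on the cut locus and the failure of $B_{cc}$-spheres to be smooth hypersurfaces; as in the Riemannian/Kähler case this is circumvented either by working with barriers (support functions from below for $r$, support functions from above for $\Delta_b r$ in the sense of distributions) or by Calabi's trick, and the maximum principle is then applied in the viscosity/distributional sense.

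I expect the main obstacle to be establishing the CR Hessian comparison and the attendant sub-Laplacian comparison rigorously. Unlike the Riemannian case, the Carnot--Carath\'eodory geodesics are not geodesics of a Riemannian metric and there is no clean second-variation formula; one must instead work along \emph{Legendrian} (horizontal) geodesics, carefully track the interaction between the horizontal Hessian, the contact form, and the Reeb direction $T$, and exploit vanishing torsion to close the comparison ODE. Controlling the terms involving $T$ and the failure of the distance function to be "Riemannian" is exactly where the Sasakian (vanishing torsion) hypothesis is essential and where the nonnegativity of pseudohermitian \emph{sectional} curvature — rather than the stronger bisectional curvature used in \cite{chl} — must suffice; making this precise, including the behavior at the cut locus, is the technical heart of the argument.
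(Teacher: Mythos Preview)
Your maximum-principle step contains a genuine gap that is not merely technical. You assert that $\log|f|$ is $\Delta_b$-subharmonic and that $h(r)=a\log r+b$ is $\Delta_b$-subharmonic (because, you claim, $\Delta_b\log r\ge 0$), and then conclude that $u=\log|f|-h$ satisfies $\Delta_b u\ge 0$. But subharmonic minus subharmonic carries no sign; the implication is simply false. Even if one repairs the intended logic---requiring $h$ to be \emph{super}harmonic so that $-h$ contributes nonnegatively---the needed inequality $\Delta_b\log r\le 0$, equivalently $\Delta_b r\le 1/r$, does \emph{not} follow from any sub-Laplacian comparison available here. The paper's own Corollary~\ref{c31} gives only $\Delta_b r\le (2n-1)/r$ under nonnegative \emph{bisectional} curvature (and $\le 2^n/r$ under nonnegative Ricci); for $n\ge 2$ this is strictly larger than $1/r$, and under the weaker hypothesis of nonnegative \emph{sectional} curvature alone no full sub-Laplacian bound is even established. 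So the scalar sub-Laplacian route cannot close, and your claim that the comparison ``forces $\log r$ to be $\Delta_b$-subharmonic'' has the direction of the bound backwards in any case.

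The paper's proof---following Liu in the K\"ahler setting---does not use $\Delta_b$ at all but rather the complex Hessian in the \emph{single} direction $Z_1=\tfrac{1}{\sqrt2}(\nabla_b r - iJ\nabla_b r)$. The key analytic input is Lemma~\ref{m301}: along a radial horizontal geodesic one derives the Riccati inequality $2r_{1\overline1}^{\,2}+\partial_r r_{1\overline1}+\tfrac12 R_{1\overline1 1\overline1}\le 0$ by differentiating $|\nabla_b r|^2\equiv 1$, whence $r_{1\overline1}\le\tfrac{1}{2r}$ and $(\log r)_{1\overline1}\le 0$; it is precisely here that only the \emph{sectional} curvature $R_{1\overline1 1\overline1}$ enters, which is the whole point of the theorem. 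On the $\log|f|$ side one uses not mere subharmonicity but the sharp equality $(\log|f|)_{1\overline1}(q)=0$ at any $q$ with $f(q)\neq 0$, obtained by restricting to the transverse K\"ahler leaf through $q$ (this is where vanishing torsion and $f_0=0$ are used) and invoking the Poincar\'e--Lelong equation there. At an interior maximum $q$ of $G-F$ one then gets $(G-F)_{1\overline1}(q)\le 0$ from the maximum principle and $(G-F)_{1\overline1}(q)\ge 0$ from the above, which is not yet a contradiction; the paper therefore perturbs $F$ to $F_\epsilon=a_\epsilon\log(r-\epsilon)+b_\epsilon$, for which $(\log(r-\epsilon))_{1\overline1}<0$ strictly, forcing $(G-F_\epsilon)_{1\overline1}(q_\epsilon)>0$. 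Your proposal misses both the single-direction mechanism and this $\epsilon$-perturbation, and the sub-Laplacian heuristic you sketch cannot substitute for them.
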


\begin{remark}
1. In the course of the proof of the CR three-circle theorem, we derive the
following CR sub-Laplacian comparison%
\[
\Delta_{b}r\leq \frac{(2n-1)}{r}%
\]
if $\left(  M,J,\theta \right)  $ is a complete noncompact pseudohermitian
$\left(  2n+1\right)  $-manifold of vanishing torsion with nonnegative
pseudohermitian bisectional curvature. It is sharp in case of $n=1.$ See
Corollary \ref{c31} for details.

2. As a consequence of the sub-Laplacian comparison, Liouville theorem holds
for positive pseduoharmonic functions in a complete noncompact pseudohermitian
$(2n+1)$-manifold of vanishing torsion and nonnegative pseudohermitian Ricci
curvature (\cite{cklt}).
\end{remark}

As an application of the preceding theorem, we have the enhanced version of
the sharp dimension estimate for CR holomorphic functions of polynomial growth
and its rigidity which is served a generalization of authors previous results
(\cite{chl}).

\begin{theorem}
\label{m308}If $\left(  M,J,\theta \right)  $ is a complete noncompact
pseudohermitian $\left(  2n+1\right)  $-manifold of vanishing torsion with
nonnegative pseudohermitian sectional curvature, then%
\[
\dim_{%
\mathbb{C}
}\left(  \mathcal{O}_{d}^{CR}(M)\right)  \leq \dim_{%
\mathbb{C}
}\left(  \mathcal{O}_{d}^{CR}(\mathbb{H}^{n})\right)
\]
for any positive integer $d\in%
\mathbb{N}
$; morever, if $M$ is simply connected, then the equality holds if only if
$\left(  M,J,\theta \right)  $ is CR-isomorphic to $\left(  2n+1\right)
$-dimensional Heisenberg group $\mathbb{H}^{n}$.
\end{theorem}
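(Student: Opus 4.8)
The plan is to run, in the CR category, G.~Liu's deduction of the sharp dimension estimate from the three-circle theorem, with Theorem~\ref{m302} as the main input. Fix $p\in M$ and write $B(r)=B_{cc}(p,r)$, $M_f(r)=\sup_{B(r)}|f|$. By Theorem~\ref{m302}, for every $f\in\mathcal{O}^{CR}(M)$ the ratio $M_f(kr)/M_f(r)$ is nondecreasing in $r$ for each $k\ge 1$, which is equivalent to convexity of $s\mapsto\log M_f(e^{s})$. Since vanishing torsion makes $(M,J,\theta)$ Sasakian, its CR structure is real-analytic, so any $f\in\mathcal{O}^{CR}(M)$ is real-analytic near $p$, and if $f\not\equiv 0$ its vanishing order $d(f):=\operatorname{ord}_p(f)$ is a well-defined nonnegative integer with $M_f(r)=c_f\,r^{\,d(f)}(1+o(1))$, $c_f>0$, as $r\to 0$. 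Moreover, in CR normal coordinates $(z,t)$ at $p$, in which $\overline{\partial}_b$ and $T$ osculate their Heisenberg models to the relevant order, the equations $\overline{\partial}_b f=0$ and $f_0=0$ force the leading homogeneous part of $f$ (of parabolic degree $d(f)$) to be a holomorphic polynomial in $z$ alone, hence to lie in the space $\mathcal{H}_k$ of homogeneous holomorphic polynomials of degree $k=d(f)$ in $n$ variables, with $\dim_{\mathbb{C}}\mathcal{H}_k=\binom{n+k-1}{n-1}$; a recursive (triangular) computation on the Taylor expansion shows the same way that the space of admissible $d$-jets at $p$ of elements of $\mathcal{O}_d^{CR}(M)$ has dimension at most $\sum_{k=0}^{d}\binom{n+k-1}{n-1}=\binom{n+d}{n}=\dim_{\mathbb{C}}\mathcal{O}_d^{CR}(\mathbb{H}^n)$.

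Next I would combine convexity with this local asymptotics. Because $s\mapsto\log M_f(e^{s})$ is convex and equals $d(f)s+\log c_f+o(1)$ as $s\to-\infty$, all its difference quotients are $\ge d(f)$, so $M_f(r)\ge M_f(1)\,r^{\,d(f)}$ for every $r\ge 1$. Hence if $f\in\mathcal{O}_d^{CR}(M)$, so that $M_f(r)\le C(1+r)^d$, one must have $d(f)\le d$. Therefore the linear map $f\mapsto j^{d}_p f$ sending $f$ to its $d$-jet at $p$ is injective on $\mathcal{O}_d^{CR}(M)$: a nonzero element of the kernel would have $d(f)\ge d+1$, hence $M_f(r)\ge c\,r^{d+1}$ for $r\ge 1$, contradicting polynomial growth of degree $\le d$. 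Combined with the dimension count for the target established above, this gives $\dim_{\mathbb{C}}\mathcal{O}_d^{CR}(M)\le\binom{n+d}{n}=\dim_{\mathbb{C}}\mathcal{O}_d^{CR}(\mathbb{H}^n)$.

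For the characterization of equality, suppose $M$ is simply connected and $\dim_{\mathbb{C}}\mathcal{O}_d^{CR}(M)=\dim_{\mathbb{C}}\mathcal{O}_d^{CR}(\mathbb{H}^n)$ for some $d\ge 1$, so the jet map of the previous paragraph is onto. Applying this with $d=1$ yields functions $u_1,\dots,u_n\in\mathcal{O}_1^{CR}(M)$ of linear growth whose $(1,0)$-differentials $\partial_b u_\alpha$ at $p$ form a coframe; each $u_\alpha$ is pseudoharmonic since $\overline{\partial}_b u_\alpha=0$ together with $(u_\alpha)_0=0$ gives $\Delta_b u_\alpha=0$. I would then show that each $u_\alpha$ is affine, in the sense that its full pseudohermitian Hessian vanishes. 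First, $(u_\alpha)_{\beta\bar\gamma}=0$, $(u_\alpha)_{\beta 0}=(u_\alpha)_{0\beta}=(u_\alpha)_{00}=0$ follow directly from $\overline{\partial}_b u_\alpha=0$, $(u_\alpha)_0=0$ and vanishing torsion by commuting covariant derivatives; so only $(u_\alpha)_{\beta\gamma}$ can be nonzero. The Bochner formula for CR-holomorphic functions on a vanishing-torsion manifold gives $\Delta_b|\nabla_b u_\alpha|^2=2|(u_\alpha)_{\beta\gamma}|^2+2R_{\beta\bar\gamma}(u_\alpha)^{\beta}\overline{(u_\alpha)^{\gamma}}\ge 0$; the CR sub-Laplacian comparison of Corollary~\ref{c31} yields a Cheng--Yau gradient estimate showing $|\nabla_b u_\alpha|$ is bounded; and the rigidity contained in the equality case of Theorem~\ref{m302}, together with the Liouville theorem of the Remark, forces $|\nabla_b u_\alpha|^2$ to be constant, hence $(u_\alpha)_{\beta\gamma}=0$ and $R_{\beta\bar\gamma}(u_\alpha)^{\beta}\overline{(u_\alpha)^{\gamma}}=0$. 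Thus $\{\partial_b u_\alpha\}$ is a parallel coframe, so the pseudohermitian curvature vanishes; a complete simply connected pseudohermitian $(2n+1)$-manifold of vanishing torsion and zero pseudohermitian curvature is CR-isomorphic to $\mathbb{H}^n$, with $(u_1,\dots,u_n)\colon M\to\mathbb{C}^n$ realizing the isomorphism modulo the Reeb flow; since the reverse inequality is obvious for $\mathbb{H}^n$, this completes the characterization.

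The main obstacle is exactly this last point: deducing from the bare equality of dimensions that the extremal linear-growth CR-holomorphic functions have parallel differentials, which requires orchestrating the vanishing-torsion Bochner identity, the sub-Laplacian comparison of Corollary~\ref{c31}, and the equality case of the three-circle theorem, followed by the CR space-form classification on a simply connected manifold. A secondary technical step, used in the first paragraph, is the verification that a CR-holomorphic function with $f_0=0$ has integer vanishing order at $p$ and a holomorphic-polynomial leading jet; this rests on the real-analyticity of Sasakian CR structures and on the osculation of $\overline{\partial}_b$ and $T$ by their Heisenberg models in CR normal coordinates.
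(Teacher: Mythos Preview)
Your proof of the dimension inequality is essentially the paper's argument: both show that the $d$-jet map at $p$ is injective on $\mathcal{O}_d^{CR}(M)$, because any nonzero $f$ in the kernel would have $\operatorname{ord}_p f\ge d+1$, whence $M_f(r)\ge c\,r^{d+1}$ for large $r$ by the monotonicity coming from the three-circle theorem (the paper packages this as Corollary~\ref{m310} and Proposition~\ref{m304}), contradicting $f\in\mathcal{O}_d^{CR}(M)$.

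Your rigidity argument, however, has a genuine gap and proceeds along a different route from the paper. The hypothesis is that equality holds for \emph{some} fixed $d\in\mathbb{N}$; this says only that the $d$-jet map is onto, so any prescribed linear leading part can be realized by some $f\in\mathcal{O}_d^{CR}(M)$. When you write ``applying this with $d=1$'' to obtain $u_1,\dots,u_n\in\mathcal{O}_1^{CR}(M)$, you are assuming such $f$ have \emph{linear} growth, which is not justified: equality at degree $d$ does not imply equality at degree $1$, and the growth degree of $f$ cannot be read off from its jet at $p$. Without genuinely linear-growth CR-holomorphic functions the Bochner/Cheng--Yau/Liouville program you sketch cannot begin. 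A secondary weakness is that the Liouville theorem you invoke (the Remark after Theorem~\ref{m302}) is for positive \emph{pseudoharmonic} functions, whereas $|\nabla_b u_\alpha|^2$ is only sub-pseudoharmonic; concluding it is constant requires a different Liouville-type statement that you have not supplied.

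The paper's route avoids all of this and works directly at the given $d$. From equality one chooses $f\in\mathcal{O}_d^{CR}(M)$ with local expansion $z_1^d+O(r^{d+1})$, so $\operatorname{ord}_p f=d$; then $M_f(r)/r^d$ is simultaneously increasing (Corollary~\ref{m310}) and decreasing (Proposition~\ref{m304}), hence constant. This forces equality throughout the chain of inequalities in the proofs of Theorem~\ref{m302} and Lemma~\ref{m301} (in particular in~(\ref{310})), which yields $R_{1\overline{1}1\overline{1}}=0$ at the points $q(r)$ where $|f|$ attains its maximum on $\partial B(p,r)$; letting $r\to 0^+$ and using the chosen form of $f$ gives $R_{1\overline{1}1\overline{1}}(p)=0$. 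Since $p$ and the direction $Z_1$ are arbitrary, the pseudohermitian sectional curvature vanishes identically, and the conclusion follows from the Tanno/Blair classification of simply connected Sasakian space forms.
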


\begin{remark}
1. In the forthcoming paper, one of the most important application for CR
three-circle theorem, we expect that there exists a nonconstant CR-holomorphic
function of polynomial growth on a complete noncompact pseudohermitian
$\left(  2n+1\right)  $-manifold $\left(  M,J,\theta \right)  $ of vanishing
torsion, nonnegative pseudohermitian bisectional curvature and maximal volume
growth. This is the first step toward the second CR Yau's uniformization
conjecture (\cite{liu2}).

2. As in the paper of G. Liu (\cite{liu3}), by applying Cheeger-Colding theory
for the Webster metric on CR manifolds and H\"{o}rmander $L^{2}$-technique of
$\overline{\partial}_{b}$ on the space of basic forms and this CR three-circle
theorem, we shall work on the third CR Yau's uniformization conjecture as well.
\end{remark}

Besides, the CR three-circle theorem could\ be extended to the case when $M$
admits the pseudohermitian sectional curvature bounded below:

\begin{theorem}
\label{m303}Let $\left(  M,J,\theta \right)  $ be a complete noncompact
pseudohermitian $\left(  2n+1\right)  $-manifold, $r\left(  x\right)
=d_{cc}\left(  p,x\right)  $ and $Z_{1}=\frac{1}{\sqrt{2}}\left(  \nabla
_{b}r-iJ\nabla_{b}r\right)  $ for some fixed point $p\in M$. If the
pseudohermitian sectional curvature $R_{1\overline{1}1\overline{1}}\left(
x\right)  $ has the inequality
\begin{equation}
R_{1\overline{1}1\overline{1}}\left(  x\right)  \geq g\left(  r\left(
x\right)  \right)  \label{305}%
\end{equation}
for $g\in C^{0}\left(  \left[  0,+\infty \right)  \right)  $ and the
pseudohermitian torsion vanishes, a function $u\left(  r\right)  \in
C^{1}\left(
\mathbb{R}
^{+}\right)  $ satisfies
\begin{equation}
2u^{2}+u^{\prime}+\frac{g}{2}\geq0 \label{306}%
\end{equation}
with $u\left(  r\right)  \thicksim \frac{1}{2r}$ as $r\rightarrow0^{+}$ and a
function $h\left(  r\right)  \in C^{1}\left(
\mathbb{R}
^{+}\right)  $ satisfies
\begin{equation}
h^{\prime}\left(  r\right)  >0 \label{307}%
\end{equation}
and%
\begin{equation}
\frac{1}{2}h^{\prime \prime}\left(  r\right)  +h^{\prime}\left(  r\right)
u\left(  r\right)  \leq0 \label{308}%
\end{equation}
with $h\left(  r\right)  \thicksim \log r$ as $r\rightarrow0^{+}$, then $\log
M_{f}\left(  r\right)  $ is convex with respect to the function $h\left(
r\right)  $ for $f\in \mathcal{O}^{CR}(M)$; moreover, if the vanishing order
$ord_{p}\left(  f\right)  $ of $f\in \mathcal{O}^{CR}(M)$ at $p$ is equal to
$d$, then $\frac{M_{f}\left(  r\right)  }{\exp \left(  dh\left(  r\right)
\right)  }$ is increasing with respect to $r$.
\end{theorem}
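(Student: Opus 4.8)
The plan is to reduce the convexity statement to a differential inequality along minimizing Carnot–Carathéodory geodesics, in parallel with the proof of the main CR three-circle theorem (Theorem \ref{m302}) but keeping the curvature lower bound $g(r)$ explicit rather than setting it to zero. First I would set $F(x)=\log|f(x)|$ for $f\in\mathcal{O}^{CR}(M)$ away from the zero set of $f$; since $f$ is CR-holomorphic with $f_0=0$, a Bochner-type computation (already used in the nonnegative case) shows that $F$ is pseudo-subharmonic in a suitable sense along the radial direction, and more precisely one controls $\Delta_b F$ from below using the pseudohermitian sectional curvature in the radial complex direction, i.e.\ exactly the quantity $R_{1\bar1 1\bar1}$ appearing in \eqref{305}. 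The upshot I want is that the function $w(r)=\log M_f(r)$, interpreted as a supremum, satisfies in the barrier/viscosity sense a second-order inequality whose coefficients are governed by the sub-Laplacian comparison; here the comparison is no longer $\Delta_b r\le (2n-1)/r$ but the weighted version coming from the Riccati inequality $2u^2+u'+g/2\ge0$, which is precisely why the hypotheses \eqref{306} on $u$ and the normalization $u(r)\sim 1/(2r)$ are imposed ($u$ plays the role of (half) the radial logarithmic derivative of the comparison model).

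Next I would carry out the ODE comparison. Define $\phi(r)=h(r)$ and observe that conditions \eqref{307}–\eqref{308} say exactly that $\phi$ is an increasing solution of $\tfrac12\phi''+\phi' u\le0$, i.e.\ $\phi$ is a concave-type reparametrization adapted to the distorted radial geometry. The key step is then a maximum-principle argument: suppose $\log M_f$ fails to be convex as a function of $h$ on some interval $[r_1,r_3]$; form the linear-in-$h$ interpolant and subtract, producing a function that attains an interior positive maximum at some radius $r_2$ and some point $x_2\in\partial B_{cc}(p,r_2)$ where $|f(x_2)|=M_f(r_2)$. Applying the sub-Laplacian comparison at $x_2$ together with the pseudo-subharmonicity of $\log|f|$ and the defining inequality \eqref{308} for $h$ yields a sign contradiction at $x_2$, exactly as in Liu's Kähler argument and in the proof of Theorem \ref{m302}; one must be slightly careful because $r$ is only Lipschitz and $\partial B_{cc}(p,r_2)$ may meet the cut locus, so the argument should be run with an upper barrier for $r$ (Calabi's trick adapted to the CR setting) and the inequality $\Delta_b r\le$ (model value) understood in the support sense. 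The monotonicity of $M_f(r)/\exp(d\,h(r))$ when $\mathrm{ord}_p(f)=d$ then follows from the convexity in $h$ by letting $r_1\to0^+$: the normalizations $u(r)\sim 1/(2r)$ and $h(r)\sim\log r$ force $\log M_f(r)-d\,h(r)\to$ a finite limit and the difference quotient is monotone, so the ratio is nondecreasing.

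The main obstacle, as in the unweighted case, is establishing the correct radial sub-Laplacian/Riccati comparison under only the lower curvature bound \eqref{305}: one has to compute $\Delta_b r$ along a CC-geodesic, deal with the fact that the horizontal Hessian of $r$ is not the full Hessian and that $T r$ need not vanish, and then integrate the Riccati inequality $y'+2y^2+\tfrac12 g\le0$ (with $y$ essentially the radial derivative of $\log$ of the transverse Jacobian) against the chosen supersolution $u$. The vanishing-torsion hypothesis is what makes the relevant Bochner and Riccati identities close up without extra torsion terms; I would isolate this computation as a lemma (the curvature-dependent refinement of Corollary \ref{c31}) and then feed it into the maximum-principle scheme above essentially verbatim. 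A secondary technical point is the regularity of $M_f(r)$ in $r$: it is only known a priori to be continuous, so the convexity statement should be proved first for the smooth mollified sup and then passed to the limit, or phrased throughout in the language of comparison functions so that no second derivative of $M_f$ is ever used directly.
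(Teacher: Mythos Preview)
Your overall architecture---maximum principle on an annulus, a Riccati-type comparison replacing the nonnegative case, Calabi's trick at the cut locus, and deducing monotonicity by sending $r_1\to0^+$---matches the paper. But two points in your proposal would not close as written.

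First, you repeatedly invoke the \emph{full} sub-Laplacian (of $\log|f|$, of $r$, of $h(r)$), whereas the hypothesis \eqref{305} only controls the radial \emph{sectional} curvature $R_{1\bar1 1\bar1}$, hence only the single Hessian entry $r_{1\bar1}$. The comparison you actually get from \eqref{306} with the asymptotic $u\sim\frac{1}{2r}$ is $r_{1\bar1}\le u(r)$, not a bound on $\Delta_b r$; the latter would require bisectional or Ricci control. The paper therefore runs the contradiction entirely in the $(1,\bar1)$ slot: at an interior maximum $q$ of $G-F$ (with $G=(h(r_3)-h(r_1))\log|f|$ and $F$ the linear-in-$h$ interpolant) one has $(G-F)_{1\bar1}(q)\le0$; since $f\in\mathcal{O}^{CR}(M)$ with $f_0=0$, the transverse K\"ahler structure and the Poincar\'e--Lelong equation give $G_{1\bar1}(q)=0$ exactly (not merely $\ge0$), while $(h(r))_{1\bar1}=\tfrac12 h''+h' r_{1\bar1}\le \tfrac12 h''+h' u\le0$ by \eqref{308}. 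Your Bochner formulation ``controls $\Delta_b F$ from below using $R_{1\bar1 1\bar1}$'' conflates these and would not go through.

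Second, the inequalities above yield only $(G-F)_{1\bar1}(q)\ge0$ together with $(G-F)_{1\bar1}(q)\le0$, i.e.\ equality---no contradiction. The paper resolves this by an $\epsilon$-perturbation of the comparison function, replacing $F$ by $F_\epsilon=a_\epsilon\log\bigl(e^{h(r)}-\epsilon\bigr)+b_\epsilon$ (with $a_\epsilon,b_\epsilon$ fixed by the boundary values); a direct computation using $r_{1\bar1}\le u$ and \eqref{307}--\eqref{308} then gives $(F_\epsilon)_{1\bar1}(q_\epsilon)<0$ strictly, hence $(G-F_\epsilon)_{1\bar1}(q_\epsilon)>0$, which \emph{is} a contradiction. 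This strictness device is essential and is absent from your plan; the mollification of $M_f(r)$ you mention addresses a different (regularity) issue and does not supply the missing strict sign.
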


As precedes, there is also a sharp dimension estimate when the pseudohermitian
sectional curvature is asymptotically nonnegative.

\begin{theorem}
\label{m309}If $\left(  M,J,\theta \right)  $ is a complete noncompact
pseudohermitian $\left(  2n+1\right)  $-manifold, $r\left(  x\right)
=d_{cc}\left(  p,x\right)  $ for some fixed point $p\in M$, and there are two
positive constants $\epsilon$, $A$ such that
\[
R_{j\overline{j}j\overline{j}}\left(  x\right)  \geq-\frac{A}{\left(
1+r\left(  x\right)  \right)  ^{2+\epsilon}}%
\]
for any $Z_{j}\in T_{x}^{1,0}M$ with $\left \vert Z_{j}\right \vert =1$, then
there is a constant $C\left(  \epsilon,A\right)  >0$ such that, for any $d\in%
\mathbb{N}
$,
\[
\dim_{%
\mathbb{C}
}\left(  \mathcal{O}_{d}^{CR}(M)\right)  \leq C\left(  \epsilon,A\right)
d^{n}.
\]
Furthermore, if $d\leq e^{-\frac{3A}{\epsilon}}$, then
\[
\dim_{%
\mathbb{C}
}\left(  \mathcal{O}_{d}^{CR}(M)\right)  =1.
\]
Finally, if
\[
\frac{A}{\epsilon}\leq \frac{1}{4d}%
\]
for $d\in%
\mathbb{N}
$, then we have%
\[
\dim_{%
\mathbb{C}
}\left(  \mathcal{O}_{d}^{CR}(M)\right)  \leq \dim_{%
\mathbb{C}
}\left(  \mathcal{O}_{d}^{CR}(\mathbb{H}^{n})\right)  .
\]

\end{theorem}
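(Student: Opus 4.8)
The plan is to run the argument of Theorem \ref{m303} with the explicit choice of comparison functions dictated by the curvature lower bound $g(r)=-A/(1+r)^{2+\epsilon}$, and then feed the resulting growth estimate into the dimension-counting machinery already used to prove Theorem \ref{m308}. First I would produce an admissible pair $(u,h)$ for this $g$. Taking $u(r)=\tfrac{1}{2r}+\phi(r)$ with $\phi$ a small correction, the Riccati-type inequality \eqref{306}, $2u^2+u'+\tfrac{g}{2}\ge 0$, becomes to leading order $\phi' + \tfrac{2}{r}\phi \ge \tfrac{A}{2(1+r)^{2+\epsilon}}$, which one can solve by an integrating factor: $\phi(r)=\tfrac{1}{r^2}\int_0^r \tfrac{A s^2}{2(1+s)^{2+\epsilon}}\,ds$, and since $\int_0^\infty s^2(1+s)^{-2-\epsilon}\,ds$ converges only requires $\epsilon>1$ — so in general one keeps $\phi(r)=O(r^{-1+\delta})$ or simply bounds $\int_0^r$ crudely, getting $u(r)\le \tfrac{1}{2r}+ \tfrac{C(\epsilon,A)}{r}\cdot(\text{something bounded})$. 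The honest statement I would aim for is $u(r)\le \tfrac{1}{2r}+ c(\epsilon,A)$ for a constant $c$ with $c\to 0$ as $A/\epsilon\to 0$; this is the quantitative fact that powers the last two assertions. Then $h$ is obtained by solving the ODE coming from \eqref{308}: $h'(r)=\exp\big(-2\int_{1}^{r} u\big)$ normalized so $h(r)\sim\log r$ near $0$, giving $h(r)=\log r + O\big(\int_0^r c(\epsilon,A)\big)$ and, crucially, $h(r)\ge \log r - \tfrac{3A}{\epsilon}$ (or a similar explicit bound) for all $r$, with the $\tfrac{3A}{\epsilon}$ coming from estimating $\int_0^\infty$ of the correction term in $u$ against $A/\epsilon$.

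With such $(u,h)$ in hand, Theorem \ref{m303} gives that for $f\in\mathcal O^{CR}_d(M)$ with $\mathrm{ord}_p(f)\le d$, the ratio $M_f(r)/\exp(d\,h(r))$ is nondecreasing, hence for $r_2\ge r_1$,
\[
\frac{M_f(r_2)}{M_f(r_1)} \;\le\; \exp\!\big(d(h(r_2)-h(r_1))\big) \;\le\; \Big(\frac{r_2}{r_1}\Big)^{d}\exp\!\big(d\cdot C(\epsilon,A)\big),
\]
using $h(r_2)-h(r_1)\le \log(r_2/r_1)+C(\epsilon,A)$. This polynomial-with-controlled-constant growth is exactly the input to the dimension estimate: one fixes a large radius, considers the linear map sending $f$ to the collection of its "leading jets" up to order roughly $d$ at $p$ (equivalently, restriction to a small ball followed by comparison with the model $\mathbb H^n$), and bounds $\dim \mathcal O^{CR}_d(M)$ by the number of such jets, which on the Heisenberg group is $\dim \mathcal O^{CR}_d(\mathbb H^n)$. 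Carrying the extra factor $\exp(d\,C(\epsilon,A))$ through this counting produces the bound $\dim \mathcal O^{CR}_d(M)\le C(\epsilon,A)\,d^n$. For the second assertion, if $d\le e^{-3A/\epsilon}$ then $d\cdot C(\epsilon,A)<0$ effectively (more precisely $d(h(r)-\log r)\to -\infty$ forces any such $f$ to have sub-linear growth and vanish at $p$ to infinite order, hence be constant), giving $\dim=1$. For the third, the hypothesis $A/\epsilon\le \tfrac{1}{4d}$ makes $c(\epsilon,A)$ small enough that $d(h(r_2)-h(r_1))\le \log(r_3/r_1)\log M_f(r_2)\le\cdots$ — i.e. the genuine three-circle convexity \eqref{101} is recovered with no loss — and then the proof of Theorem \ref{m308} applies verbatim to yield the sharp bound $\dim \mathcal O^{CR}_d(M)\le \dim \mathcal O^{CR}_d(\mathbb H^n)$.

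The main obstacle, I expect, is purely the bookkeeping of constants: one must track how $C(\epsilon,A)$, and the threshold quantities $e^{-3A/\epsilon}$ and $\tfrac{1}{4d}$, emerge from the two integrations (the Riccati solve for $u$ and the ODE solve for $h$) so that the three numbered conclusions come out with exactly the stated constants rather than merely "some constant." In particular the passage from "$M_f(r)/e^{dh(r)}$ increasing" to an honest polynomial bound, and the verification that the jet-counting step loses only a multiplicative $e^{dC(\epsilon,A)}$ and not more, require care; but no new geometric idea beyond Theorems \ref{m303} and \ref{m308} is needed, since the curvature enters only through the single scalar inequality \eqref{305} with $g(r)=-A(1+r)^{-2-\epsilon}$.
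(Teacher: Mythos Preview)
Your outline has the central inequality running the wrong way, and this is not a cosmetic slip but the reason the argument as written cannot close. Theorem \ref{m303} says that if $\mathrm{ord}_p(f)=k$ then $M_f(r)/\exp(kh(r))$ is \emph{increasing}; for $r_2\ge r_1$ this gives
\[
\frac{M_f(r_2)}{M_f(r_1)} \;\ge\; \exp\!\big(k(h(r_2)-h(r_1))\big),
\]
i.e.\ a \emph{lower} bound on growth in terms of the vanishing order, not an upper bound on $M_f(r_2)/M_f(r_1)$ as you wrote. Consequently you cannot use it to cap the growth of a generic $f\in\mathcal O^{CR}_d(M)$ and then count jets up to order $d$. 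The paper's argument works in the contrapositive direction: one considers the Poincar\'e--Siegel jet map up to order $d':=[d\exp(2A/\epsilon)]$ and shows it is injective, because any nonzero $f\in\mathcal O^{CR}_d(M)$ with $\mathrm{ord}_p(f)\ge d'+1$ would, by the lower bound above, satisfy $M_f(r)\ge C_1 r^{(d'+1)\exp(-2A/\epsilon)}>C_1 r^d$, contradicting $f\in\mathcal O^{CR}_d(M)$.

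Relatedly, your asymptotic for $h$ is off. With the explicit choice $u(r)=\tfrac{1}{2r}+\tfrac{A}{(1+r)^{1+\epsilon}}$ (which the paper takes directly, rather than via an integrating-factor solve), the equation $\tfrac12 h''+h'u=0$ integrates to
\[
h'(r)=\frac{1}{r}\exp\!\Big(\frac{2A}{\epsilon(1+r)^{\epsilon}}-\frac{2A}{\epsilon}\Big),
\]
so for large $r$ one has $h'(r)\sim e^{-2A/\epsilon}/r$ and hence $h(r)\ge e^{-2A/\epsilon}\log r + C$, \emph{not} $h(r)\ge \log r-\tfrac{3A}{\epsilon}$. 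The factor $e^{-2A/\epsilon}$ multiplying $\log r$ (rather than an additive shift) is exactly what forces the jet map to go up to order $[d\,e^{2A/\epsilon}]$, and it is what makes the constants $e^{-3A/\epsilon}$ and $\tfrac{1}{4d}$ in the second and third assertions come out: when $d\le e^{-3A/\epsilon}$ one has $d\,e^{2A/\epsilon}<1$, and when $A/\epsilon\le\tfrac{1}{4d}$ one has $d\,e^{2A/\epsilon}<d+1$. Once you reverse the inequality and correct the form of $h$, the remainder of your plan (Poincar\'e--Siegel injectivity, then the three numerical consequences) matches the paper.
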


The method we adopt here is inspired from \cite{liu1}. This paper is organized
as follows. In Section $2$, we introduce some basic notions about
pseudohermitian manifolds and the necessary results for this paper. In Section
$3$, we show that the CR analogue of the three-circle theorem and some of its
applications; specially, we confirm the first CR Yau's uniformization
conjecture on the sharp dimension estimate for CR holomorphic functions of
polynomial growth and its rigidity. As a by-product, we obtain the CR
sub-Laplacian comparison theorem. In Section $4$, we generalize the CR
three-circle theorem to the case when the pseudohermitian sectional curvature
is bounded below. It enables us to derive the dimension estimate when the
pseudohermitian sectional curvature is asymptotically nonnegative.

\section{Preliminaries}

We introduce some basic materials about a pseudohermitian manifold (see
\cite{l} and \cite{dt} for more details). Let $(M,\xi)=\left(  M,J,\theta
\right)  $ be a $(2n+1)$-dimensional, orientable, contact manifold with the
contact structure $\xi$. A CR structure compatible with $\xi$ is an
endomorphism $J:\xi \rightarrow \xi$ such that $J^{2}=-Id$. We also assume that
$J$ satisfies the integrability condition: If $X$ and $Y$ are in $\xi$, then
so are $[JX,Y]+[X,JY]$ and $J([JX,Y]+[X,JY])=[JX,JY]-[X,Y]$. A contact
manifold $\left(  M,\xi \right)  =\left(  M,J,\theta \right)  $ with a CR
structure $J$ compatible with $\xi$ together with a contact form $\theta$ is
called a pseudohermitian manifold or a strictly pseudoconvex CR manifold as
well. Such a choice induces a unique vector field $T\in \Gamma \left(
TM\right)  $ transverse to the contact structure $\xi$, which is called the
Reeb vector field or the characteristic vector field of $\theta$ such that
$\iota_{T}\theta=0=\iota_{T}d\theta$. A CR structure $J$ could be extended to
the complexified space $\xi^{%
\mathbb{C}
}=%
\mathbb{C}
\otimes \xi$ of the contact structure $\xi$and decompose it into the direct sum
of $T_{1,0}M$ and $T_{0,1}M$ which are eigenspaces of $J$ corresponding to the
eigenvalues $1$ and $-1$, respectively.

Let $\left \{  T,Z_{\alpha},Z_{\bar{\alpha}}\right \}  _{\alpha \in I_{n}}$ be a
frame of $TM\otimes \mathbb{C}$, where $Z_{\alpha}$ is any local frame of
$T_{1,0}M,\ Z_{\bar{\alpha}}=\overline{Z_{\alpha}}\in T_{0,1}M$, and $T$ is
the Reeb vector field (or the characteristic direction) and $I_{n}=\left \{
1,2,...,n\right \}  $. Then $\left \{  \theta^{\alpha},\theta^{\bar{\alpha}%
},\theta \right \}  $, the coframe dual to $\left \{  Z_{\alpha},Z_{\bar{\alpha}%
},T\right \}  $, satisfies
\[
d\theta=ih_{\alpha \overline{\beta}}\theta^{\alpha}\wedge \theta^{\overline
{\beta}}%
\]
for some positive definite matrix of functions $(h_{\alpha \bar{\beta}})$.
Usually, we assume such matrix $(h_{\alpha \bar{\beta}})$ is the identity
matrix A pseudohermitian manifold $\left(  M,J,\theta \right)  $ is called a
Sasakian manifold if the pseudohermitian torsion $\tau=\iota_{T}T_{D}=0$

The Levi form $\left \langle \ ,\  \right \rangle _{L_{\theta}}$ is the Hermitian
form on $T_{1,0}M$ defined by%
\[
\left \langle Z,W\right \rangle _{L_{\theta}}=-i\left \langle d\theta
,Z\wedge \overline{W}\right \rangle .
\]

We can extend $\left \langle \ ,\  \right \rangle _{L_{\theta}}$ to $T_{0,1}M$ by
defining $\left \langle \overline{Z},\overline{W}\right \rangle _{L_{\theta}%
}=\overline{\left \langle Z,W\right \rangle }_{L_{\theta}}$ for all $Z,W\in
T_{1,0}M$. The Levi form induces naturally a Hermitian form on the dual bundle
of $T_{1,0}M$, denoted by $\left \langle \ ,\  \right \rangle _{L_{\theta}^{\ast
}}$, and hence on all the induced tensor bundles. Integrating the Hermitian
form (when acting on sections) over $M$ with respect to the volume form
$d\mu=\theta \wedge(d\theta)^{n}$, we get an inner product on the space of
sections of each tensor bundle.

The Tanaka-Webster connection of $(M,J,\theta)$ is the connection $\nabla$ on
$TM\otimes \mathbb{C}$ (and extended to tensors) given, in terms of a local
frame $Z_{\alpha}\in T_{1,0}M$, by%

\[
\nabla Z_{\alpha}=\omega_{\alpha}{}^{\beta}\otimes Z_{\beta},\quad \nabla
Z_{\bar{\alpha}}=\omega_{\bar{\alpha}}{}^{\bar{\beta}}\otimes Z_{\bar{\beta}%
},\quad \nabla T=0,
\]
where $\omega_{\alpha}{}^{\beta}$ are the $1$-forms uniquely determined by the
following equations:%

\[
\left \{
\begin{array}
[c]{l}%
d\theta^{\alpha}+\omega_{\beta}^{\alpha}\wedge \theta^{\beta}=\theta \wedge
\tau^{\alpha}\\
\tau_{\alpha}\wedge \theta^{\alpha}=0\\
\omega_{\beta}^{\alpha}+\omega_{\overline{\alpha}}^{\overline{\beta}}=0
\end{array}
\right.  .
\]

We can write (by Cartan lemma) $\tau_{\alpha}=A_{\alpha \gamma}\theta^{\gamma}$
with $A_{\alpha \gamma}=A_{\gamma \alpha}$. The curvature of Tanaka-Webster
connection $\nabla$, expressed in terms of the coframe $\{ \theta=\theta
^{0},\theta^{\alpha},\theta^{\bar{\alpha}}\}$, is%
\[
\left \{
\begin{array}
[c]{c}%
\Pi_{\beta}^{\alpha}=\overline{\Pi_{\overline{\beta}}^{\overline{\alpha}}%
}=d\omega_{\beta}^{\alpha}+\omega_{\gamma}^{\alpha}\wedge \omega_{\beta
}^{\gamma}\\
\Pi_{0}^{\alpha}=\Pi_{\alpha}^{0}=\Pi_{0}^{\overline{\alpha}}=\Pi
_{\overline{\alpha}}^{0}=\Pi_{0}^{0}=0
\end{array}
\right.  .
\]

Webster showed that $\Pi_{\beta}{}^{\alpha}$ can be written%
\[
\Omega_{\beta}^{\alpha}=\Pi_{\beta}^{\alpha}+i\tau^{\alpha}\wedge \theta
_{\beta}-i\theta^{\alpha}\wedge \tau_{\beta}=R_{\beta \text{ \ }\gamma
\overline{\delta}}^{\text{ \ }\alpha}\theta^{\gamma}\wedge \theta
^{\overline{\delta}}+W_{\beta \gamma}^{\alpha}\theta^{\gamma}\wedge
\theta-W_{\beta \overline{\gamma}}^{\alpha}\theta^{\overline{\gamma}}%
\wedge \theta
\]
where the coefficients satisfy
\[
R_{\beta \bar{\alpha}\rho \bar{\sigma}}=\overline{R_{\alpha \bar{\beta}\sigma
\bar{\rho}}}=R_{\bar{\alpha}\beta \bar{\sigma}\rho}=R_{\rho \bar{\alpha}%
\beta \bar{\sigma}};\text{ }W_{\beta \gamma}^{\alpha}=A_{\beta \gamma},^{\alpha
};\text{ }W_{\beta \overline{\gamma}}^{\alpha}=A_{\overline{\gamma}}{}^{\alpha
},_{\beta}.
\]

Besides $R_{\alpha \overline{\beta}\gamma \overline{\delta}}$, the other part of
the curvature of Tanaka-Webster connection are clear:%
\begin{equation}
\left \{
\begin{array}
[c]{l}%
R_{\alpha \overline{\beta}\gamma \overline{\mu}}=-2i\left(  A_{\alpha \mu}%
\delta_{\beta \gamma}-A_{\alpha \gamma}\delta_{\beta \mu}\right) \\
R_{\alpha \overline{\beta}\overline{\gamma}\overline{\mu}}=-2i\left(
A_{\overline{\beta}\overline{\mu}}\delta_{\alpha \gamma}-A_{\overline{\beta
}\overline{\gamma}}\delta_{\alpha \mu}\right) \\
R_{\alpha \overline{\beta}0\gamma}=A_{\gamma \alpha,\overline{\beta}}\\
R_{\alpha \overline{\beta}0\overline{\gamma}}=-A_{\overline{\beta}%
\overline{\gamma},\alpha}%
\end{array}
\right.  . \label{324}%
\end{equation}

Here $R_{\beta \text{ \ }\gamma \overline{\delta}}^{\text{ \ }\alpha}$ is the
pseudohermitian curvature tensor field, $R_{\alpha \bar{\beta}}=R_{\gamma}%
{}^{\gamma}{}_{\alpha \bar{\beta}}$ is the pseudohermitian Ricci curvature
tensor field and $A_{\alpha \beta}$\ is the pseudohermitian torsion.
$R=h^{\alpha \overline{\beta}}R_{\alpha \bar{\beta}}$ denotes the
pseudohermitian scalar curvature. Moreover, we define the pseudohermitian
bisectional curvature tensor field%
\[
R_{\alpha \bar{\alpha}\beta \overline{\beta}}(X,Y)=R_{\alpha \bar{\alpha}%
\beta \overline{\beta}}X_{\alpha}X_{\overline{\alpha}}Y_{\beta}Y_{\bar{\beta}%
},
\]
the bitorsion tensor field
\[
T_{\alpha \overline{\beta}}(X,Y)=\frac{1}{i}(A_{\alpha \gamma}X^{\gamma
}Y_{\overline{\beta}}-A_{\overline{\beta}\overline{\gamma}}X^{\overline
{\gamma}}Y_{\alpha}),
\]
and the torsion tensor field%
\[
Tor\left(  X,Y\right)  =tr\left(  T_{\alpha \overline{\beta}}\right)  =\frac
{1}{i}(A_{\alpha \beta}X^{\beta}Y^{\alpha}-A_{\overline{\alpha}\overline{\beta
}}X^{\overline{\beta}}Y^{\overline{\alpha}}),
\]
where $X=X^{\alpha}Z_{\alpha},Y=Y^{\alpha}Z_{\alpha}$ in $T_{1,0}M$.

We will denote the components of the covariant derivatives with indices
preceded by comma. The indices $\{0,\alpha,\bar{\alpha}\}$ indicate the
covariant derivatives with respect to $\{T,Z_{\alpha},Z_{\bar{\alpha}}\}$. For
the covariant derivatives of a real-valued function, we will often omit the
comma, for instance, $u_{\alpha}=Z_{\alpha}u,\ u_{\alpha \bar{\beta}}%
=Z_{\bar{\beta}}Z_{\alpha}u-\omega_{\alpha}{}^{\gamma}(Z_{\bar{\beta}%
})Z_{\gamma}u$. The subgradient $\nabla_{b}\varphi$ of a smooth real-valued
function $\varphi$ is defined by
\[
\left \langle \nabla_{b}\varphi,Z\right \rangle _{L_{\theta}}=Z\varphi
\]
for $Z\in \Gamma \left(  \xi \right)  $ where $\Gamma \left(  \xi \right)  $
denotes the family of all smooth vector fields tangent to the conact plane
$\xi$. We could locally write the subgradient $\nabla_{b}\varphi$ as
\[
\nabla_{b}u=u^{\alpha}Z_{\alpha}+u^{\overline{\alpha}}Z_{\bar{\alpha}}.
\]

Accordingly, we could define the subhessian $Hess_{b}$ as the complex linear
map%
\[
Hess_{b}:T_{1,0}M\oplus T_{0,1}M\longrightarrow T_{1,0}M\oplus T_{0,1}M
\]
by%
\[
\left(  Hess_{b}\varphi \right)  Z=\nabla_{Z}\nabla_{b}\varphi
\]
for $Z\in \Gamma \left(  \xi \right)  $ and a smooth real-valued function
$\varphi$.

Also, the sub-Laplacian is defined by%
\[
\Delta_{b}u=tr\left(  Hess_{b}u\right)  =u_{\alpha}{}^{\alpha}+u^{\alpha}%
{}_{\alpha}\text{.}%
\]

Now we recall the following commutation relations (see \cite{l}). Let
$\varphi$ be a smooth real-valued function, $\sigma=\sigma_{\alpha}%
\theta^{\alpha}$ be a $\left(  1,0\right)  $-form and $\varphi_{0}=T\varphi,$
then we have%
\begin{equation}
\left \{
\begin{array}
[c]{cll}%
\varphi_{\alpha \beta} & = & \varphi_{\beta \alpha},\\
\varphi_{\alpha \bar{\beta}}-\varphi_{\bar{\beta}\alpha} & = & ih_{\alpha
\overline{\beta}}\varphi_{0}\\
\varphi_{0\alpha}-\varphi_{\alpha0} & = & A_{\alpha \beta}\varphi^{\beta}\\
\sigma_{\alpha,\beta \gamma}-\sigma_{\alpha,\gamma \beta} & = & i\left(
A_{\alpha \gamma}\sigma_{\beta}-A_{\alpha \beta}\sigma_{\gamma}\right) \\
\sigma_{\alpha,\overline{\beta}\overline{\gamma}}-\sigma_{\alpha
,\overline{\gamma}\overline{\beta}} & = & -i(h_{\alpha \overline{\gamma}%
}A_{\overline{\beta}}^{\delta}\sigma_{\delta}-h_{\alpha \overline{\beta}%
}A_{\overline{\gamma}}^{\delta}\sigma_{\delta})\\
\sigma_{\alpha,\beta \overline{\gamma}}-\sigma_{\alpha,\overline{\gamma}\beta}
& = & ih_{\beta \overline{\gamma}}\sigma_{\alpha,}{}_{0}+R_{\alpha}^{\text{
}\delta}{}_{\beta \overline{\gamma}}\sigma_{\delta}\\
\sigma_{\alpha,0\beta}-\sigma_{\alpha,\beta0} & = & A^{\overline{\gamma}}%
{}_{\beta}\sigma_{\alpha,\bar{\gamma}}-A_{\alpha \beta,\bar{\gamma}}%
\sigma^{\overline{\gamma}}\\
\sigma_{\alpha,0\bar{\beta}}-\sigma_{\alpha,\bar{\beta}0} & = & A^{\gamma}%
{}_{\bar{\beta}}\sigma_{\alpha,\gamma}+A_{\bar{\gamma}\bar{\beta},\alpha
}\sigma^{\overline{\gamma}}%
\end{array}
\right.  . \label{201}%
\end{equation}

Subsequently, we introduce the notion about the Carnot-Carath\'{e}odory distance.

\begin{definition}
A piecewise smooth curve $\gamma:[0,1]\rightarrow \left(  M,\xi \right)  $ is
said to be horizontal if $\gamma^{\prime}\left(  t\right)  \in \xi$ whenever
$\gamma^{\prime}\left(  t\right)  $ exists. The length of $\gamma$ is then
defined by
\[
L(\gamma)=\int_{0}^{1}\left \langle \gamma^{\prime}\left(  t\right)
,\gamma^{\prime}\left(  t\right)  \right \rangle _{L_{\theta}}^{\frac{1}{2}%
}dt.
\]
The Carnot-Carath\'{e}odory distance between two points $p,\ q\in M$ is
\[
d_{cc}(p,q)=\inf \left \{  L(\gamma)|\  \gamma \in C_{p,q}\right \}  ,
\]
where $C_{p,q}$ is the set of all horizontal curves joining $p$ and $q$. We
say $\left(  M,\xi \right)  $ is complete if it's complete as a metric space.
By Chow's connectivity theorem, there always exists a horizontal curve joining
$p$ and $q$, so the distance is finite. The diameter $d_{c}$ is defined by
\[
d_{c}(M)=\sup \left \{  d_{cc}(p,q)|\ p,q\in M\right \}  .
\]
Note that there is a minimizing geodesic joining $p$ and $q$ so that its
length is equal to the distance $d_{cc}(p,q).$
\end{definition}

For any fixed point $x\in M$, a CR-holomorphic function $f$ is called to be of
polynomial growth if there are a nonnegative number $d$ and a positive
constant $C=C\left(  x,d,f\right)  $, depending on $x$, $d$ and $f$, such that%
\[
\left \vert f\left(  y\right)  \right \vert \leq C\left(  1+d_{cc}\left(
x,y\right)  \right)  ^{d}%
\]
for all $y\in M$, where $d_{cc}\left(  x,y\right)  $ denotes the
Carnot-Carath\'{e}odory distance between $x$ and $y.$ Furthermore, we could
define the degree of a CR-holomorphic function $f$ of polynomial growth by%
\begin{equation}
\deg \left(  f\right)  =\inf \left \{  d\geq0\left \vert
\begin{array}
[c]{c}%
\left \vert f\left(  y\right)  \right \vert \leq C\left(  1+d_{cc}\left(
x,y\right)  \right)  ^{d}\text{ \ }\forall \text{ }y\in M,\\
\mathrm{for}\text{ }\mathrm{some\ }\text{ }d\geq0\text{ \ }\mathrm{and\ }%
\text{ }C=C\left(  x,d,f\right)
\end{array}
\right.  \right \}  . \label{203}%
\end{equation}

With these notions, we could define the family $\mathcal{O}_{d}^{CR}(M)$ of
all CR-holomorphic functions $f$ of polynomial growth of degree at most $d$
with $Tf(x)=f_{0}(x)=0:$
\begin{equation}
\mathcal{O}_{d}^{CR}(M)=\{f\in \mathcal{O}^{CR}(M)\ |\text{ }\  \deg \left(
f\right)  \leq d\text{ }\}. \label{202}%
\end{equation}

Finally, we denote by $ord_{p}\left(  f\right)  =\max \left \{  m\in%
\mathbb{N}
\text{ }|\ D^{\alpha}f\left(  p\right)  =0\text{,\ }\forall \text{ }\left \vert
\alpha \right \vert <m\  \right \}  $ the vanishing order of CR-holomorphic
function $f$ at $p$ where $D^{\alpha}=%
{\displaystyle \prod \limits_{j\in I_{n}}}
Z_{j}^{\alpha_{j}}$ with $\alpha=\left(  \alpha_{1},\alpha_{2,}...,\alpha
_{n}\right)  $.

\section{CR\ Three-Circle Theorem}

In this section, we will derive the CR analogue of three-circle theorem on a
complete noncompact pseudohermitian $(2n+1)$-manifold. Before that, we need a
lemma which is essential in the course of the proof of the CR three-circle
theorem as follows:

\begin{lemma}
\label{m301}If $\left(  M,J,\theta \right)  $ is a complete noncompact
pseudohermitian $\left(  2n+1\right)  $-manifold of vanishing torsion with
nonnegative pseudohermitian sectional curvature, the Carnot-Carath\'{e}odory
distance function $r\left(  x\right)  =d_{cc}\left(  p,x\right)  $ from a
fixed point $p$ to a point $x$ in $M$ is smooth at $q\in M$ and $Z_{1}%
=\frac{1}{\sqrt{2}}\left(  \nabla_{b}r-iJ\nabla_{b}r\right)  $, then
\begin{equation}
r_{1\overline{1}}\leq \frac{1}{2r}. \label{301}%
\end{equation}
In particular, we have
\[
\left(  \log r\right)  _{1\overline{1}}\leq0.
\]

\end{lemma}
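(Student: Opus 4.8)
The plan is to estimate the sub-Laplacian of $r$ along a minimizing Carnot--Carath\'eodory geodesic by a second-variation/Bochner-type argument, exactly as in the Riemannian Hessian comparison but carried out in the CR setting with the Tanaka--Webster connection. First I would work at a point $q$ where $r$ is smooth and fix the unit vector field $e_{1}=\nabla_{b}r$ (normalized by the Levi form), together with $Z_{1}=\tfrac{1}{\sqrt2}(\nabla_{b}r-iJ\nabla_{b}r)\in T_{1,0}M$. Since the torsion vanishes and $r$ is an eikonal-type function ($|\nabla_b r|_{L_\theta}=1$, and $r_0=Tr=0$ along the geodesic by the Sasakian structure), differentiating $r^{\alpha}r_{\alpha}=1$ twice should give a Riccati-type inequality for the relevant component of $Hess_b\, r$. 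Concretely, I expect to derive along the geodesic, parametrized by arclength $t=r$, an ODE of the form $\frac{d}{dt}(2 r_{1\bar 1}) + (2 r_{1\bar 1})^{2} + R_{1\bar 1 1\bar 1} \le 0$ (possibly with the extra contact directions contributing nonnegatively and hence droppable), where $R_{1\bar 1 1\bar 1}\ge 0$ is precisely the pseudohermitian sectional curvature hypothesis.

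The second step is to solve this differential inequality with the correct initial asymptotics. Near $p$ the CR manifold looks like the Heisenberg group, so $r_{1\bar 1}\sim \tfrac{1}{2r}$ as $r\to 0^{+}$ (this is the model computation on $\mathbb{H}^n$ and is where the constant $\tfrac12$, rather than the Riemannian $1$, enters). Comparing with the model ODE $\phi' + \phi^{2} = 0$, whose solution with $\phi\sim \tfrac1t$ is $\phi=\tfrac1t$, a standard Riccati comparison (or the substitution reducing it to a Sturm-type comparison for the linearized equation) yields $2 r_{1\bar 1}(t)\le \tfrac1t$, i.e. $r_{1\bar 1}\le \tfrac{1}{2r}$, which is \eqref{301}. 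The final assertion is then immediate: $(\log r)_{1\bar 1} = \frac{r_{1\bar 1}}{r} - \frac{r_{1}r_{\bar 1}}{r^{2}} = \frac{r_{1\bar 1}}{r} - \frac{1}{2r^{2}} \le \frac{1}{2r^{2}} - \frac{1}{2r^{2}} = 0$, using $r_{1}r_{\bar 1}=|Z_1 r|^2=\tfrac12$.

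The main obstacle I anticipate is establishing the Riccati inequality rigorously in the CR category: unlike the Riemannian case, the Tanaka--Webster geodesics and the distance $d_{cc}$ do not fit a single Riemannian comparison machine, the commutation relations \eqref{201} introduce torsion and curvature correction terms when one commutes $\nabla_{Z_1}$ with $\nabla_{Z_{\bar 1}}$, and one must carefully track the non-horizontal direction $T$. Vanishing torsion kills the $A_{\alpha\beta}$ terms, but one still has to verify that $r$ is sufficiently regular along the geodesic, that the mixed term $r_{1\bar 1}$ is real and captures the dominant part of $Hess_b\, r$, and that the contributions of the orthogonal contact directions $Z_\beta$, $\beta\ge 2$, appear with a favorable sign so they can be discarded. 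I would handle this by choosing a parallel unitary frame $\{Z_\alpha\}$ along the geodesic with $Z_1$ as above, writing out the second covariant derivative of the constraint $r^\alpha r_\alpha = 1$ together with $r_0=0$, and invoking the commutation formulas to isolate the curvature term; the asymptotic normalization at $p$ would be justified by the osculating Heisenberg structure. Once the scalar Riccati inequality is in hand, the comparison and the logarithmic corollary are routine.
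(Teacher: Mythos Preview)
Your proposal is correct and follows essentially the same route as the paper: differentiate the eikonal identity $|\nabla_b r|^2=1$ in the $Z_1 Z_{\bar 1}$ direction, use the commutation relations (with torsion terms vanishing) to extract the Riccati inequality $2r_{1\bar 1}^{2}+\partial_r r_{1\bar 1}+\tfrac12 R_{1\bar 1 1\bar 1}\le 0$, and compare with the model solution $\tfrac{1}{2r}$. The one ingredient you leave implicit but the paper makes explicit is the identity $r_{11}=-r_{1\bar 1}$ along the radial geodesic (obtained from $\nabla_{\nabla_b r}\nabla_b r=0$), which is what turns $|r_{11}|^2+|r_{1\bar 1}|^2$ into $2r_{1\bar 1}^2$ and gives the coefficient you anticipated; the paper also derives $r_0=0$ along $\gamma$ from the reality of $r_{1\bar 1}$ via $r_{1\bar 1}-r_{\bar 1 1}=ir_0$, rather than invoking the Sasakian structure directly.
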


\begin{proof}
Let $\left \{  e_{j},e_{\widetilde{j}},T\right \}  _{j\in I_{n}}$be an
orthonormal frame at $q$ where $e_{\widetilde{j}}=Je_{j}$ and $e_{1}%
=\nabla_{b}r$. By Corollary 2.3 in \cite{dz} and vanishing pseudohermitian
torsion, we could parallel transport such frame at $q$ to obtain the
orthonormal frame along the radial $\nabla$-geodesic $\gamma$ from $p$ to $q$.
Hence we have an orthonormal frame $\left \{  Z_{j},Z\overline{_{j}},T\right \}
_{j\in I_{n}}$ along $\gamma$ where $Z_{j}=\frac{1}{\sqrt{2}}\left(
e_{j}-ie_{\widetilde{j}}\right)  $ and $Z_{\overline{j}}=\overline{Z_{j}}$. By
the fact that $\gamma$ is the $\nabla$-geodesic, we have
\[%
\begin{array}
[c]{ccl}%
r_{11} & = & -\frac{1}{2}\left(  ie_{2}e_{1}+e_{2}e_{2}\right)  r-\left(
\nabla_{Z_{1}}Z_{1}\right)  r\\
& = & -\frac{1}{2}\left(  ie_{2}e_{1}+e_{2}e_{2}\right)  r+\frac{1}{2}\left[
i\nabla_{\left(  J\nabla_{b}r\right)  }\nabla_{b}r+J\left(  \nabla_{\left(
J\nabla_{b}r\right)  }\nabla_{b}r\right)  \right]
\end{array}
\]
and%
\[%
\begin{array}
[c]{ccl}%
r_{1\overline{1}} & = & \frac{1}{2}\left(  ie_{2}e_{1}+e_{2}e_{2}\right)
r-\left(  \nabla_{Z_{\overline{1}}}Z_{1}\right)  r\\
& = & \frac{1}{2}\left(  ie_{2}e_{1}+e_{2}e_{2}\right)  r-\frac{1}{2}\left[
i\nabla_{\left(  J\nabla_{b}r\right)  }\nabla_{b}r+J\left(  \nabla_{\left(
J\nabla_{b}r\right)  }\nabla_{b}r\right)  \right]  .
\end{array}
\]
Therefore along $\gamma$
\begin{equation}
r_{11}=-r_{1\overline{1}}. \label{302}%
\end{equation}
Moreover, by computing
\[%
\begin{array}
[c]{ccl}%
r_{1} & = & Z_{1}r\\
& = & \frac{1}{\sqrt{2}}\left(  \nabla_{b}r-iJ\nabla_{b}r\right)  r\\
& = & \frac{1}{\sqrt{2}}\left(  \left \vert \nabla_{b}r\right \vert
^{2}-i\left \langle \nabla_{b}r,J\nabla_{b}r\right \rangle \right) \\
& = & \frac{1}{\sqrt{2}}%
\end{array}
\]
and
\[%
\begin{array}
[c]{ccl}%
r_{1\overline{1}} & = & Z_{\overline{1}}Z_{1}r-\Gamma_{\overline{1}1}^{1}%
r_{1}\\
& = & Z_{\overline{1}}Z_{1}r-g_{\theta}\left(  \left[  Z_{\overline{1}}%
,Z_{1}\right]  ,Z_{1}\right)  r_{1}\\
& = & Z_{\overline{1}}Z_{1}r-\frac{1}{\sqrt{2}}g_{\theta}\left(  \left[
Z_{\overline{1}},Z_{1}\right]  ,Z_{1}\right)  ,
\end{array}
\]
we derive that $r_{1\overline{1}}$ is real by the commutation formula.
Therefore, we have
\begin{equation}
r_{0}=0 \label{304a}%
\end{equation}
along the $\nabla$-geodesic $\gamma$. At the point $q$,
\begin{equation}%
\begin{array}
[c]{ccl}%
0 & = & \frac{1}{2}\left(  \left \vert \nabla_{b}r\right \vert ^{2}\right)
_{1\overline{1}}\\
& = & \underset{\alpha}{\sum}\left(  \left \vert r_{\alpha1}\right \vert
^{2}+\left \vert r_{\alpha \overline{1}}\right \vert ^{2}+r_{\alpha1\overline{1}%
}r_{\overline{\alpha}}+r_{\overline{\alpha}1\overline{1}}r_{\alpha}\right) \\
& \geq & \left \vert r_{11}\right \vert ^{2}+\left \vert r_{1\overline{1}%
}\right \vert ^{2}+r_{11\overline{1}}r_{\overline{1}}+r_{\overline{1}%
1\overline{1}}r_{1}\\
& = & 2r_{1\overline{1}}^{2}+\left(  r_{1\overline{1}1}+ir_{10}+R_{11\overline
{1}}^{1}r_{1}\right)  r_{\overline{1}}+\left(  r_{1\overline{1}}%
-ir_{0}\right)  _{\overline{1}}r_{1}\\
& = & 2r_{1\overline{1}}^{2}+\left \langle \nabla_{b}r_{1\overline{1}}%
,\nabla_{b}r\right \rangle _{L_{\theta}}+\frac{1}{2}R_{1\overline{1}%
1\overline{1}}\\
& \geq & 2r_{1\overline{1}}^{2}+\left(  \nabla_{b}r\right)  r_{1\overline{1}%
}\\
& = & 2r_{1\overline{1}}^{2}+\left(  \nabla r\right)  r_{1\overline{1}}\\
& = & 2r_{1\overline{1}}^{2}+\frac{\partial r_{1\overline{1}}}{\partial r}.
\end{array}
\label{310}%
\end{equation}
Here we use the facts that $r_{1}=\frac{1}{\sqrt{2}}$ and $r_{1\overline{1}}$
is real, the equality (\ref{302}), (\ref{304a}), and the commutation formulas
(\ref{201}). Together with the initial condition of $r_{1\overline{1}}$ as $r$
goes to zero, we have%
\[
r_{1\overline{1}}\leq \frac{1}{2r}.
\]
In particular, (\ref{301}) indicates that
\[
\left(  \log r\right)  _{1\overline{1}}=\frac{r_{1\overline{1}}}{r}%
-\frac{\left \vert r_{1}\right \vert ^{2}}{r^{2}}\leq0.
\]
This completes the proof.
\end{proof}

\bigskip

Actually, the similar deductions enable us to derive the substantial CR
sub-Laplacian comparisons below.

\begin{corollary}
\label{c31} If $\left(  M,J,\theta \right)  $ is a complete noncompact
pseudohermitian $\left(  2n+1\right)  $-manifold of vanishing torsion with
nonnegative pseudohermitian Ricci curvature, then we have the CR sub-Laplacian
comparison, for $n\geq2$,%
\begin{equation}
\Delta_{b}r\leq \frac{2^{n}}{r}; \label{321}%
\end{equation}
furthermore, if the pseudohermitian bisectional curvature is nonnegative,
then
\begin{equation}
\Delta_{b}r\leq \frac{2n-1}{r}. \label{322}%
\end{equation}
Finally, the equality holds in (\ref{322}) only if, for any $j\in I_{n}$,
\begin{equation}
R_{1\overline{1}j\overline{j}}=0. \label{333}%
\end{equation}

\end{corollary}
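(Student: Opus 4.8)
The plan is to mimic the argument of Lemma \ref{m301} but to take the trace over a full unitary frame rather than restricting to the single direction $Z_1$ aligned with $\nabla_b r$. Fix $q\in M$ at which $r$ is smooth, let $\gamma$ be the radial $\nabla$-geodesic from $p$ to $q$, and parallel-transport an orthonormal frame $\{e_j,e_{\widetilde j},T\}_{j\in I_n}$ along $\gamma$ with $e_1=\nabla_b r$, producing the unitary frame $\{Z_j,Z_{\bar j},T\}$ with $Z_j=\tfrac{1}{\sqrt2}(e_j-ie_{\widetilde j})$. As in the lemma, along $\gamma$ one has $r_1=\tfrac1{\sqrt2}$, all other $r_j$ vanishing, $r_0=0$, each $r_{j\bar k}$ real, and $r_{jk}=-r_{j\bar k}$ when $j=k=1$; more generally the geodesic/parallel conditions force $r_{1j}$ and $r_{1\bar j}$ to be controlled by $r_{j\bar k}$. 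First I would apply the Bochner-type identity $0=\tfrac12(|\nabla_b r|^2)_{j\bar j}$ summed over $j$: expanding exactly as in \eqref{310} but keeping all indices gives
\[
0=\sum_{j,k}\Bigl(|r_{jk}|^2+|r_{j\bar k}|^2\Bigr)+\langle\nabla_b(\Delta_b r),\nabla_b r\rangle_{L_\theta}+\tfrac12\sum_j R_{1\bar1 j\bar j},
\]
where the commutation formulas \eqref{201} and $r_0=0$, $r_1=\tfrac1{\sqrt2}$ are used to convert the third-derivative terms into a radial derivative of $\Delta_b r$ plus the Ricci term $\tfrac12\sum_j R_{1\bar1 j\bar j}=\tfrac12 R_{1\bar1}$.

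Next I would bound the quadratic terms $\sum_{j,k}(|r_{jk}|^2+|r_{j\bar k}|^2)$ from below by an appropriate multiple of $(\Delta_b r)^2$. Writing $H=(r_{j\bar k})$ for the Hermitian subhessian restricted to the directions orthogonal to $Z_1$ (the $Z_1$-row/column being pinned down explicitly by the geodesic conditions, contributing the $2r_{1\bar1}^2$ term as before), Cauchy–Schwarz over the $(n-1)$ remaining diagonal entries gives $\sum|r_{j\bar k}|^2\ge \tfrac1{n-1}\bigl(\sum_{j\ge2}r_{j\bar j}\bigr)^2$, and combining with the $Z_1$-contribution yields a differential inequality of Riccati type $\tfrac{\partial}{\partial r}(\Delta_b r)+c_n(\Delta_b r)^2\le -\tfrac12 R_{1\bar1}\le 0$ for a dimensional constant $c_n$. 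Integrating with the initial asymptotics $\Delta_b r\sim \tfrac{2n-1}{r}$ (respectively the weaker $\sim\tfrac{2^n}{r}$ bound) as $r\to0^+$ produces \eqref{321} and \eqref{322}; under the stronger hypothesis of nonnegative bisectional curvature one can afford the sharp constant $c_n$ giving $2n-1$, while under only nonnegative Ricci curvature the cruder frame estimate forces $2^n$.

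For the equality case in \eqref{322}, I would trace back through the inequalities: equality in the integrated Riccati inequality forces equality pointwise along every geodesic, which in turn forces the dropped curvature term $\tfrac12\sum_j R_{1\bar1 j\bar j}$ to vanish and the Cauchy–Schwarz step to be tight. Since each summand $R_{1\bar1 j\bar j}=R_{1\bar1 j\bar j}(Z_1,Z_j)\ge0$ by the nonnegativity hypothesis, their sum vanishing forces $R_{1\bar1 j\bar j}=0$ for every $j\in I_n$, which is \eqref{333}. The main obstacle I anticipate is the bookkeeping in the first step: correctly accounting for the off-diagonal mixed third derivatives $r_{j\bar k}$ with $j\ne k$ and the terms $R_{1\bar1}^{\,1}r_1$ arising from \eqref{201}, and verifying that the non-radial directions do not contribute an unwanted sign — this is exactly where the vanishing torsion hypothesis is essential, since it kills the torsion terms in the commutation relations \eqref{201} that would otherwise obstruct reducing everything to an ODE in $r$. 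A secondary subtlety is handling the cut locus / points where $r$ fails to be smooth, which is dealt with in the usual way by working in the barrier (support function) sense.
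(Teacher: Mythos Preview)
Your Bochner-to-Riccati strategy is the right one and is close to the paper's, but your explanation of where the two constants come from is wrong, and you have missed the paper's actual mechanism for the bisectional case. The paper handles the two hypotheses by two distinct routes. Under nonnegative \emph{bisectional} curvature it does not sum first: for each fixed $j\neq1$ it derives the scalar Riccati inequality $0\geq r_{j\bar j}^{\,2}+\partial_r r_{j\bar j}+\tfrac12 R_{1\bar1 j\bar j}$, drops the curvature term index by index (this is exactly where the bisectional hypothesis is used), integrates each one to $r_{j\bar j}\leq\frac{1}{r}$, and only then adds these to $r_{1\bar1}\leq\frac{1}{2r}$ from Lemma~\ref{m301} to obtain $\Delta_b r=2\sum_j r_{j\bar j}\leq\frac{2n-1}{r}$. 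Under nonnegative \emph{Ricci} curvature the individual $R_{1\bar1 j\bar j}$ have no sign, so one must sum before integrating, getting a single Riccati in $S=\sum_j r_{j\bar j}$ with curvature term $\tfrac12 R_{1\bar1}\geq0$; the paper then applies the crude bound $2r_{1\bar1}^2+\sum_{j\neq1}r_{j\bar j}^2\geq 2^{1-n}S^2$ and integrates to reach $\frac{2^n}{r}$. Your claims that the distinction lies in a ``sharper $c_n$'' available only under bisectional curvature, or in different small-$r$ asymptotics of $\Delta_b r$, are both incorrect: the Cauchy--Schwarz constant is purely algebraic and independent of any curvature assumption, and the asymptotics of $\Delta_b r$ near $p$ are a fixed local quantity.

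It is worth noting that your summed approach, carried out with the \emph{optimal} algebraic constant rather than the paper's $2^{1-n}$, in fact already yields $\Delta_b r\leq\frac{2n-1}{r}$ under the Ricci hypothesis alone: minimizing $2a^2+\sum_{j\geq2}b_j^2$ subject to $a+\sum_{j\geq2}b_j=1$ gives $\frac{2}{2n-1}$ (attained at $b_j=2a$), and integrating $\partial_r S+\frac{2}{2n-1}S^2\leq0$ gives $S\leq\frac{2n-1}{2r}$. So the gap between \eqref{321} and \eqref{322} as stated is an artifact of the paper's crude Cauchy--Schwarz rather than of the two curvature hypotheses. Your treatment of the equality case is correct in outline: equality along each radial geodesic forces each dropped nonnegative term $R_{1\bar1 j\bar j}$ to vanish, which is \eqref{333}.
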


\begin{remark}
When $\left(  M,J,\theta \right)  $ is a complete noncompact pseudohermitian
$3$-manifold of vanishing torsion with nonnegative pseudohermitian Ricci
curvature, it's also easy to derive%
\[
\Delta_{b}r\leq \frac{1}{r}%
\]
from the estimate (\ref{323}). The estimate is sharp in the sense of the
equality holds only if $M$ is flat as in the proof of Theorem \ref{m308} .
\end{remark}

\begin{proof}
By the similar computation as precedes, for any $j\neq1$,
\begin{equation}%
\begin{array}
[c]{ccl}%
0 & = & \frac{1}{2}\left(  \left \vert \nabla_{b}r\right \vert ^{2}\right)
_{j\overline{j}}\\
& = & \underset{\alpha}{\sum}\left(  \left \vert r_{\alpha j}\right \vert
^{2}+\left \vert r_{\alpha \overline{j}}\right \vert ^{2}+r_{\alpha j\overline
{j}}r_{\overline{\alpha}}+r_{\overline{\alpha}j\overline{j}}r_{\alpha}\right)
\\
& \geq & \left \vert r_{j\overline{j}}\right \vert ^{2}+r_{1j\overline{j}%
}r_{\overline{1}}+r_{\overline{1}j\overline{j}}r_{1}\\
& = & r_{j\overline{j}}^{2}+\left(  r_{1\overline{j}j}+ir_{10}+R_{1j\overline
{j}}^{1}r_{1}\right)  r_{\overline{1}}+r_{\overline{1}j\overline{j}}r_{1}\\
& \geq & r_{j\overline{j}}^{2}+\left \langle \nabla_{b}r_{j\overline{j}}%
,\nabla_{b}r\right \rangle _{L_{\theta}}\\
& = & r_{j\overline{j}}^{2}+\left(  \nabla_{b}r\right)  r_{j\overline{j}}\\
& = & r_{j\overline{j}}^{2}+\frac{\partial}{\partial r}r_{j\overline{j}},
\end{array}
\label{315}%
\end{equation}
and the inequality (\ref{301}), it's easy to derive
\[
\Delta_{b}r\leq \frac{2n-1}{r}.
\]
From the inequalities (\ref{310}) and (\ref{315}), it follows that
\begin{equation}%
\begin{array}
[c]{ccl}%
0 & \geq & \left(  2r_{1\overline{1}}^{2}+\frac{\partial r_{1\overline{1}}%
}{\partial r}+\frac{1}{2}R_{1\overline{1}1\overline{1}}\right)  +\underset
{j\neq1}{\sum}\left(  r_{j\overline{j}}^{2}+\frac{\partial}{\partial
r}r_{j\overline{j}}+\frac{1}{2}R_{1\overline{1}j\overline{j}}\right) \\
& \geq & 2^{1-n}\left(  \underset{j}{%
{\displaystyle \sum}
}r_{j\overline{j}}\right)  ^{2}+\frac{\partial}{\partial r}\underset{j}{%
{\displaystyle \sum}
}r_{j\overline{j}}+\frac{1}{2}R_{1\overline{1}}\\
& \geq & 2^{1-n}\left(  \underset{j}{%
{\displaystyle \sum}
}r_{j\overline{j}}\right)  ^{2}+\frac{\partial}{\partial r}\underset{j}{%
{\displaystyle \sum}
}r_{j\overline{j}}%
\end{array}
\label{323}%
\end{equation}
and then%
\[
\Delta_{b}r=2\underset{j}{%
{\displaystyle \sum}
}r_{j\overline{j}}\leq \frac{2^{n}}{r}.
\]

\end{proof}

Now, we could proceed with the proof of the CR three-circle theorem below:

\begin{proof}
(of the Theorem \ref{m302}) First of all, we prove that if $\left(
M,J,\theta \right)  $ admits nonnegative pseudohermitian sectional curvature,
then the CR three-circle theorem holds. On the closure $\overline{A}\left(
p;r_{1},r_{3}\right)  $ of the annulus
\[
A\left(  p;r_{1},r_{3}\right)  =\left \{  x\in M\text{ }|\text{ }r_{1}<r\left(
x\right)  =d\left(  p,x\right)  <r_{3}\right \}
\]
for $0<r_{1}<r_{3}$, we define
\[
F\left(  x\right)  =\left(  \log r_{3}-\log r\left(  x\right)  \right)  \log
M_{f}\left(  r_{1}\right)  +\left(  \log r\left(  x\right)  -\log
r_{1}\right)  \log M_{f}\left(  r_{3}\right)
\]
and%
\[
G\left(  x\right)  =\left(  \log r_{3}-\log r_{1}\right)  \log \left \vert
f\left(  x\right)  \right \vert .
\]
May assume that $M_{f}\left(  r_{1}\right)  <M_{f}\left(  r_{3}\right)  $. Let
$f$ be a CR-holomorphic function on $M$ with
\begin{equation}
f_{0}=0. \label{304}%
\end{equation}
It suffices to claim
\[
G\leq F
\]
on $\overline{A}\left(  p;r_{1},r_{3}\right)  $. It's clear that $G\leq F$ on
the boundary $\partial A\left(  p;r_{1},r_{3}\right)  $ of the annulus
$A\left(  p;r_{1},r_{3}\right)  .$ Suppose that $G\left(  x\right)  >F\left(
x\right)  $ for some interior point $x$ in $A\left(  p;r_{1},r_{3}\right)  $,
then we could choose a point $q\in A\left(  p;r_{1},r_{3}\right)  $ such that
the function $\left(  G-F\right)  $ attains the maximum value at $q$.

If $q\notin Cut\left(  p\right)  $, then%
\[
i\partial_{b}\overline{\partial}_{b}\left(  G-F\right)  \left(  q\right)
\leq0
\]
by observing that the inequality $i\partial_{b}\overline{\partial}_{b}\left(
G-F\right)  \left(  q\right)  >0$, which says that $\left(  G-F\right)
_{\alpha \overline{\beta}}$ is positive definite, implies the positivity of the
sub-Lplacian $\Delta_{b}\left(  G-F\right)  \left(  q\right)  >0$ (this
contradicts that $q$ is a maximum point of $\left(  G-F\right)  $). In
particular,
\begin{equation}
\left(  G-F\right)  _{1\overline{1}}\left(  q\right)  \leq0 \label{303}%
\end{equation}
where $Z_{1}=\frac{1}{\sqrt{2}}\left(  \nabla_{b}r-iJ\nabla_{b}r\right)  $.
Note that $\left(  G-F\right)  _{0}(q)=0$ due to (\ref{304}) and (\ref{304a}).

On the other hand, it follows from \cite{fow} or \cite{chl} that there is a
transverse K\"{a}hler structure at the point $q$ and we denoted such local
coordinates in some open neighborhood $U$ of the point $q$ by $\left \{
z_{\alpha},x\right \}  _{\alpha \in I_{n}}$ with $T=\frac{\partial}{\partial x}$
and
\[
\underset{}{Z_{1}\left(  q\right)  =\left(  \frac{\partial}{\partial z_{1}%
}-\theta \left(  \frac{\partial}{\partial z_{1}}\right)  T\right)  }|_{q}.
\]

Restrict to the leaf space $\widetilde{D}=\left[  x=0\right]  $ and write the
point $y$ in $U$ as $\left(  \widetilde{y},x\right)  $. It's clear that
$q=\left(  \widetilde{q},0\right)  $. Hereafter the quantity with the tilde
means such one lies in the slice $\widetilde{D}$. This enables us to transfer
the local property of the K\"{a}hler manifolds to the CR manifolds. Let
$\widetilde{G}$ and $\widetilde{F}$ denote the restrictions of $G$ and $F$ to
the leaf space $\widetilde{D}$. So $\widetilde{q}$ is a maximum point of
$\left(  \widetilde{G}-\widetilde{F}\right)  $ and $\widetilde{f}%
=f|_{\widetilde{D}}$ is a holomorphic function on $U\cap \widetilde{D}$.
Because $\left(  \widetilde{G}-\widetilde{F}\right)  $ attains the maximum
value at $\widetilde{q}$ (this implies that $\left \vert \widetilde{f}\left(
\widetilde{q}\right)  \right \vert \neq0$), the Poincar\'{e}-Lelong equation%
\[
\frac{i}{2\pi}\partial \overline{\partial}\log \left \vert \widetilde
{f}\right \vert ^{2}=\left[  D\left(  \widetilde{f}\right)  \right]
\]
gives
\[
\widetilde{G}_{1\overline{1}}\left(  \widetilde{q}\right)  =0
\]
where $D\left(  \widetilde{f}\right)  $ is the divisor of $\widetilde{f}$.
However, the condition (\ref{304}) implies that $G$ is independent of the
characteristic direction $T.$ So we have the equality
\[
G_{1\overline{1}}\left(  q\right)  =0.
\]

This and Lemma \ref{m301} indicate that
\[
\left(  G-F\right)  _{1\overline{1}}\left(  q\right)  \geq0;
\]
however, it's not enough to obtain the contradiction from (\ref{303}). So we
take a modified function $F_{\epsilon}$ for replacing the original function
$F$ so that it enables us to get the contradiction. Set
\[
F_{\epsilon}\left(  x\right)  =a_{\epsilon}\log \left(  r\left(  x\right)
-\epsilon \right)  +b_{\epsilon}%
\]
for any sufficiently small positive number $\epsilon$. Here the two constants
$a_{\epsilon}$ and $b_{\epsilon}$ are determined by the following equations:%
\[
F_{\epsilon}\left(  r_{j}\right)  =F_{\epsilon}\left(  \partial B\left(
p,r_{j}\right)  \right)  =\log \left(  \frac{r_{3}}{r_{1}}\right)  \log
M_{f}\left(  r_{j}\right)
\]
for $j=1,3$. It's apparent to see $F_{\epsilon}\rightarrow F$ on the annulus
$A\left(  p;r_{1},r_{3}\right)  $ as $\epsilon \rightarrow0^{+}$ and
$a_{\epsilon}>0$. Let $q_{\epsilon}$ be a maximum point in $A\left(
p;r_{1},r_{3}\right)  $ of the function $\left(  G-F_{\epsilon}\right)  $.
With the same deduction, we have
\begin{equation}
\left(  G-F_{\epsilon}\right)  _{1\overline{1}}\left(  q_{\epsilon}\right)
\leq0 \label{313}%
\end{equation}
and%
\[
G_{1\overline{1}}\left(  q_{\epsilon}\right)  =0.
\]
From Lemma \ref{m301} again, we obtain%
\[
\left(  \log \left(  r-\epsilon \right)  \right)  _{1\overline{1}}\left(
q_{\epsilon}\right)  =\frac{r_{1\overline{1}}}{\left(  r-\epsilon \right)
}-\frac{1}{2\left(  r-\epsilon \right)  ^{2}}<0.
\]
Accordingly, we have
\[
\left(  G-F_{\epsilon}\right)  _{1\overline{1}}\left(  q_{\epsilon}\right)
>0\text{.}%
\]
It contradicts with the inequality (\ref{313}). This indicates that
\[
\left(  G-F_{\epsilon}\right)  \leq0
\]
in the annulus $A\left(  p;r_{1},r_{3}\right)  $.

If $q_{\epsilon}\in Cut\left(  p\right)  $, then we adopt the trick of Calabi
as follows. Choose a number $\epsilon_{1}\in \left(  0,\epsilon \right)  $ and
the point $p_{1}$ lying on the minimal $D$-geodesic from $p$ to $q_{\epsilon}$
with $d\left(  p,p_{1}\right)  =\epsilon_{1}$. Set%
\[
\widehat{r}\left(  x\right)  =d\left(  p_{1},x\right)
\]
and consider the slight modification of the function $F_{\epsilon}\left(
x\right)  $%
\[
F_{\epsilon,\epsilon_{1}}\left(  x\right)  =a_{\epsilon}\log \left(
\widehat{r}+\epsilon_{1}-\epsilon \right)  +b_{\epsilon}\text{.}%
\]

It's not hard to observe that $F_{\epsilon}\left(  q_{\epsilon}\right)
=F_{\epsilon,\epsilon_{1}}\left(  q_{\epsilon}\right)  $ and $F_{\epsilon}\leq
F_{\epsilon,\epsilon_{1}}$; hence, we know $\left(  G-F_{\epsilon,\epsilon
_{1}}\right)  $ also attains the maximum value at $q_{\epsilon}$. Then,
applying the similar argument as precedes, we still have%
\[
G-F_{\epsilon,\epsilon_{1}}\leq0
\]
in the annulus $A\left(  p;r_{1},r_{3}\right)  $. Letting $\epsilon
_{1}\rightarrow0^{+}$, then $\epsilon \rightarrow0^{+}$, the validity of the CR
three circle theorem is settled.

As for the monotonicity of (\ref{314}), it's easily derived by taking the
$3$-tuples $\left(  r_{1},r_{2},kr_{2}\right)  $ and $\left(  r_{1}%
,kr_{1},kr_{2}\right)  $ into the convexity of the CR three circle theorem for
$0<r_{1}\leq r_{2}<+\infty$.
\end{proof}

\bigskip

It's clear that we have the following sharp monotonicity:

\begin{proposition}
\label{p1}If $\left(  M,J,\theta \right)  $ is a complete noncompact
pseudohermitian $\left(  2n+1\right)  $-manifold on which CR three-circle
theorem holds and $ord_{p}f\geq k\geq1$ for some $f\in \mathcal{O}^{CR}(M)$ and
$p\in M$, then
\[
\frac{M_{f}\left(  r\right)  }{r^{k}}%
\]
is increasing in terms of $r$.
\end{proposition}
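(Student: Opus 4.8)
The plan is to feed into the convexity supplied by the three-circle theorem the growth of $M_{f}$ near $p$ that is forced by the vanishing order, and then let the inner radius shrink to $0$. We may assume $f\not\equiv 0$, since otherwise there is nothing to prove; then, $M$ being connected and $f$ being CR-holomorphic with $f_{0}\equiv 0$, $f$ cannot vanish identically on any nonempty open set, so $M_{f}(r)>0$ for every $r>0$ and all logarithms below are meaningful.

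First I would record a local bound at $p$. As in the proof of Theorem \ref{m302}, by \cite{fow} or \cite{chl} there are transverse K\"{a}hler coordinates $\left\{z_{\alpha},x\right\}$ near $p$ in which $T=\partial/\partial x$; since $f_{0}\equiv 0$ the function $f$ does not depend on $x$, and its restriction to a leaf is holomorphic, so in these coordinates $f$ is a genuine holomorphic function of $z=\left(z_{1},\dots,z_{n}\right)$. Then $ord_{p}f\geq k$ is exactly the statement that this holomorphic function vanishes to order at least $k$ at the origin, so $\left|f\right|\leq C_{1}\left|z\right|^{k}$ on a small coordinate ball; since $\left|z\right|\lesssim d_{cc}\left(p,\cdot\right)$ there, we obtain constants $\delta>0$ and $C>0$ with
\[
M_{f}\left(s\right)\leq C\,s^{k}\qquad\text{for all }0<s\leq\delta.
\]

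Now fix $0<r_{1}<r_{2}$ and pick any $s$ with $0<s<\min\left\{r_{1},\delta\right\}$. Because the CR three-circle theorem holds on $M$ and $f\in\mathcal{O}^{CR}(M)$, the function $\log M_{f}$ is convex with respect to $\log r$ on $\left(0,\infty\right)$; applying this to the triple $s\leq r_{1}\leq r_{2}$ gives
\[
\log\tfrac{r_{2}}{s}\,\log M_{f}\left(r_{1}\right)\leq\log\tfrac{r_{2}}{r_{1}}\,\log M_{f}\left(s\right)+\log\tfrac{r_{1}}{s}\,\log M_{f}\left(r_{2}\right).
\]
Substituting $\log M_{f}\left(s\right)\leq k\log s+\log C$, dividing by $\log\tfrac{r_{2}}{s}>0$, and letting $s\to 0^{+}$ (so that $\tfrac{\log(r_{1}/s)}{\log(r_{2}/s)}\to 1$, $\tfrac{k\,\log(r_{2}/r_{1})\,\log s}{\log(r_{2}/s)}\to -k\log\tfrac{r_{2}}{r_{1}}$, and the summand carrying $\log C$ tends to $0$), we arrive at
\[
\log M_{f}\left(r_{1}\right)\leq -k\log\tfrac{r_{2}}{r_{1}}+\log M_{f}\left(r_{2}\right),
\]
that is, $M_{f}\left(r_{1}\right)/r_{1}^{k}\leq M_{f}\left(r_{2}\right)/r_{2}^{k}$. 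As $0<r_{1}<r_{2}$ were arbitrary, $M_{f}\left(r\right)/r^{k}$ is increasing.

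The only delicate point is the local estimate $M_{f}(s)\leq Cs^{k}$, which uses that a CR-holomorphic function with $f_{0}\equiv 0$ genuinely has a convergent power-series expansion in the transverse holomorphic directions, so that its CR vanishing order coincides with the usual one; granting this, the rest is just the three-circle inequality and a one-variable limit. Equivalently, the displayed inequality shows that the convex function $t\mapsto\log M_{f}(e^{t})$ has asymptotic slope at least $k$ as $t\to-\infty$, hence right derivative at least $k$ everywhere, which is precisely the asserted monotonicity of $M_{f}(r)/r^{k}$.
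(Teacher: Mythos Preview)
Your proof is correct and follows essentially the same route as the paper: exploit $ord_{p}f\geq k$ to bound $M_{f}$ at small radii, feed this into the three-circle convexity inequality with the inner radius shrinking to $0$, and read off the monotonicity. The only differences are cosmetic---the paper packages the small-radius bound as $\log M_{f}(r_{1})\leq \log M_{f}(r_{3})+(k-\epsilon)\log(r_{1}/r_{3})$ and lets $\epsilon\to 0$ at the end, whereas you justify $M_{f}(s)\leq Cs^{k}$ explicitly via the transverse K\"{a}hler coordinates and take the limit $s\to 0$ directly.
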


\begin{proof}
Let $0<r_{2}\leq r_{3}<+\infty$. Since the vanishing order of $f$ at $p$ is at
least $k$, for any $\epsilon>0$, there is a sufficiently small number
$0<r_{1}<r_{2}$, such that
\begin{equation}
\log M_{f}\left(  r_{1}\right)  \leq \log M_{f}\left(  r_{3}\right)  +\left(
k-\epsilon \right)  \log \frac{r_{1}}{r_{3}}. \label{317}%
\end{equation}
Substituting (\ref{317}) into the inequality (\ref{101}), we get%
\[
\frac{M_{f}\left(  r_{2}\right)  }{r_{2}^{k-\epsilon}}\leq \frac{M_{f}\left(
r_{3}\right)  }{r_{3}^{k-\epsilon}}.
\]
The proposition is accomplished by letting $\epsilon \rightarrow0^{+}$.
\end{proof}

\bigskip

Therefore, Theorem \ref{m302} and Proposition \ref{p1} imply

\begin{corollary}
\label{m310}If $\left(  M,J,\theta \right)  $ is a complete noncompact
pseudohermitian $\left(  2n+1\right)  $-manifold of vanishing torsion with
nonnegative pseudohermitian sectional curvature, and $ord_{p}f\geq k\geq1$ for
some $f\in \mathcal{O}^{CR}(M)$ and $p\in M$, then
\[
\frac{M_{f}\left(  r\right)  }{r^{k}}%
\]
is increasing in terms of $r$.
\end{corollary}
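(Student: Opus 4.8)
The plan is straightforward: this corollary is a direct synthesis of the two preceding results, so the proof amounts to checking that their hypotheses are satisfied. First I would observe that the standing assumptions here — $M$ complete noncompact pseudohermitian of vanishing torsion with nonnegative pseudohermitian sectional curvature — are precisely the hypotheses of Theorem \ref{m302}. Hence Theorem \ref{m302} applies and guarantees that the CR three-circle theorem holds on $M$; in particular the convexity inequality (\ref{101}) is valid for every $f\in\mathcal{O}^{CR}(B_{cc}(p,R))$ and all $0<r_{1}\leq r_{2}\leq r_{3}<R$, and since $R$ may be taken arbitrarily large this holds for $f\in\mathcal{O}^{CR}(M)$ on all of $(0,+\infty)$.

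Second, I would feed this conclusion into Proposition \ref{p1}, whose only hypotheses are that the CR three-circle theorem holds on $M$ and that $ord_{p}f\geq k\geq1$ for the given $f$ and $p$. Both are now in force: the former by the previous paragraph, the latter by assumption. Proposition \ref{p1} then yields directly that $M_{f}(r)/r^{k}$ is increasing in terms of $r$, which is the assertion.

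There is essentially no obstacle here; the substantive work was carried out already in the proof of Theorem \ref{m302} (through Lemma \ref{m301}, the transverse K\"{a}hler structure at the maximum point, the modified functions $F_{\epsilon}$, and the Calabi trick handling cut points) and in the proof of Proposition \ref{p1} (through the choice of a sufficiently small inner radius $r_{1}$ realizing the vanishing order, inserted into (\ref{101})). If one preferred a self-contained argument, one could inline the proof of Proposition \ref{p1}: fix $0<r_{2}\leq r_{3}<+\infty$, use $ord_{p}f\geq k$ to select, for each $\epsilon>0$, a small $r_{1}\in(0,r_{2})$ with $\log M_{f}(r_{1})\leq\log M_{f}(r_{3})+(k-\epsilon)\log(r_{1}/r_{3})$, substitute this into the three-circle inequality furnished by Theorem \ref{m302} to get $M_{f}(r_{2})/r_{2}^{k-\epsilon}\leq M_{f}(r_{3})/r_{3}^{k-\epsilon}$, and let $\epsilon\rightarrow0^{+}$. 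But since Proposition \ref{p1} is already established, the cleanest route is simply the two-step combination above.
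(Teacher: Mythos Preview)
Your proposal is correct and matches the paper's own treatment exactly: the paper simply states that Theorem \ref{m302} and Proposition \ref{p1} together imply the corollary, which is precisely the two-step combination you describe. The optional inlined argument you sketch is also just the content of Proposition \ref{p1}'s proof, so nothing is missing or different.
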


Before giving a variety of applications of the CR three-circle theorem, we
need some notations about the CR-holomorphic functions of polynomial growth.

\begin{definition}
Let $\left(  M,J,\theta \right)  $ be a complete noncompact pseudohermitian
$\left(  2n+1\right)  $-manifold. We denote the collections%
\[
\left \{  f\in \mathcal{O}^{CR}(M)\left \vert \text{ }\underset{r\rightarrow
+\infty}{\lim \sup}\frac{M_{f}\left(  r\right)  }{r^{d}}<+\infty \right.
\right \}
\]
and
\[
\left \{  f\in \mathcal{O}^{CR}(M)\left \vert \text{ }\underset{r\rightarrow
+\infty}{\lim \inf}\frac{M_{f}\left(  r\right)  }{r^{d}}<+\infty \right.
\right \}
\]
by $\widetilde{\mathcal{O}}_{d}^{CR}(M)$ and $\widehat{\mathcal{O}}_{d}%
^{CR}(M),$ respectively.
\end{definition}

It's clear that $\widetilde{\mathcal{O}}_{d}^{CR}(M)\subseteq \mathcal{O}%
_{d}^{CR}(M)\cap \widehat{\mathcal{O}}_{d}^{CR}(M)$.

\begin{proposition}
\label{m304}If $\left(  M,J,\theta \right)  $ is a complete noncompact
pseudohermitian $\left(  2n+1\right)  $-manifold of vanishing torsion with
nonnegative pseudohermitian sectional curvature, then $f\in \widehat
{\mathcal{O}}_{d}^{CR}(M)$ if and only if $\frac{M_{f}\left(  r\right)
}{r^{d}}$ is decreasing with respect to $r$ for any $f\in \mathcal{O}^{CR}(M)$.
\end{proposition}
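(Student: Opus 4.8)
The plan is to prove both implications of Proposition~\ref{m304} by exploiting the monotonicity of the quotient $M_f(kr)/M_f(r)$ established in Theorem~\ref{m302}. The nontrivial direction is: assuming $f\in\widehat{\mathcal{O}}_d^{CR}(M)$, i.e. $\liminf_{r\to+\infty} M_f(r)/r^d<+\infty$, show that $M_f(r)/r^d$ is actually decreasing in $r$. The reverse direction is essentially immediate, since if $M_f(r)/r^d$ is decreasing then $\lim_{r\to+\infty}M_f(r)/r^d$ exists in $[0,+\infty)$, hence the $\liminf$ is finite and $f\in\widehat{\mathcal{O}}_d^{CR}(M)$ (and in fact $f\in\widetilde{\mathcal{O}}_d^{CR}(M)$).

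For the main direction, first I would record the consequence of Theorem~\ref{m302} in the form most convenient here: for fixed $0<r_1\le r_2$ and any $k\ge 1$, the ratio $M_f(kr)/M_f(r)$ is nondecreasing in $r$, and applying this with the $3$-tuples appearing in the proof of Theorem~\ref{m302} one gets that for $0<s\le t$ and any scaling factor $\lambda>1$,
\[
\frac{M_f(\lambda t)}{M_f(t)}\ \ge\ \frac{M_f(\lambda s)}{M_f(s)}.
\]
Equivalently, writing $\phi(r)=\log M_f(r)$, convexity of $\phi$ in $\log r$ means the difference quotient $\bigl(\phi(r_2)-\phi(r_1)\bigr)/\bigl(\log r_2-\log r_1\bigr)$ is nondecreasing in each argument; in particular the left derivative $\sigma(r):=\lim_{s\to r^-}\bigl(\phi(r)-\phi(s)\bigr)/(\log r-\log s)$ exists and is nondecreasing in $r$. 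The quantity $M_f(r)/r^d$ is decreasing precisely when $\phi(r)-d\log r$ is nonincreasing, i.e. when $\sigma(r)\le d$ for all $r$. So the whole proposition reduces to showing: if $\liminf_{r\to\infty}\bigl(\phi(r)-d\log r\bigr)<+\infty$, then $\sigma(r)\le d$ for every $r$.

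The key step is the following dichotomy coming from monotonicity of $\sigma$: if $\sigma(r_0)>d$ at some $r_0$, then $\sigma(r)\ge \sigma(r_0)>d$ for all $r\ge r_0$, and integrating along $\log r$ gives, for $r\ge r_0$,
\[
\phi(r)-d\log r\ \ge\ \phi(r_0)-d\log r_0+\bigl(\sigma(r_0)-d\bigr)\log\frac{r}{r_0}\ \longrightarrow\ +\infty
\]
as $r\to+\infty$, forcing $\liminf_{r\to\infty}\bigl(\phi(r)-d\log r\bigr)=+\infty$, a contradiction. Hence $\sigma(r)\le d$ everywhere, which is exactly the assertion that $M_f(r)/r^d$ is decreasing. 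The only mild technical points are to justify that $\phi=\log M_f$ is continuous and monotone nondecreasing (clear, since $M_f(r)$ is a supremum of $|f|$ over nested balls and $f$ is nonconstant unless the statement is trivial) so that the left-derivative $\sigma$ is well defined and the integration-of-the-derivative step is legitimate for a convex function of $\log r$; these are standard facts about one-variable convex functions and require no CR geometry.

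The main obstacle is purely bookkeeping: extracting the clean ``$\sigma(r)$ nondecreasing'' statement from the three-circle inequality (\ref{101}) and making sure the convexity is genuinely in the variable $\log r$ on all of $(0,+\infty)$ rather than just on a compact annulus. Since Theorem~\ref{m302} already gives the global three-circle inequality for every $f\in\mathcal{O}^{CR}(M)$ on all of $M$, this is immediate, and no new curvature estimate beyond Lemma~\ref{m301} is needed. I therefore expect the proof to be short, with the dichotomy argument above as its heart.
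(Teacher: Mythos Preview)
Your proposal is correct and follows essentially the same approach as the paper: both arguments use the convexity of $\log M_f(r)$ in $\log r$ from Theorem~\ref{m302} together with the $\liminf$ hypothesis to force every secant slope to be at most $d$. The paper's version is slightly more direct---it plugs a sequence $\lambda_j\nearrow+\infty$ realizing the $\liminf$ into the three-circle inequality with $r_3=\lambda_j$ and lets $\epsilon\to 0$, avoiding any mention of left derivatives---but the content is the same.
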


\begin{proof}
It's straightforward to see that the sufficient part holds. On the other hand,
let $0<r_{1}\leq r_{2}<+\infty$, by the assumption that
\[
f\in \widehat{\mathcal{O}}_{d}^{CR}(M),
\]
for any positive number $\epsilon$, there's a sequence $\left \{  \lambda
_{j}\right \}  \nearrow+\infty$ such that
\[
\log M_{f}\left(  \lambda_{j}\right)  \leq \log M_{f}\left(  r_{1}\right)
+\left(  d+\epsilon \right)  \log \lambda_{j}.
\]
From Theorem \ref{m302}, by taking $r_{3}=\lambda_{j}$, it follows that%
\[
\log M_{f}\left(  r_{2}\right)  \leq \log M_{f}\left(  r_{1}\right)  +\left(
d+\epsilon \right)  \log \frac{r_{2}}{r_{1}}.
\]
Let $\epsilon$ go to zero, the necessary part follows.
\end{proof}

\bigskip

From the last proposition, it's easy to deduce the conclusion below.

\begin{corollary}
\label{m305}If $\left(  M,J,\theta \right)  $ is a complete noncompact
pseudohermitian $\left(  2n+1\right)  $-manifold of vanishing torsion with
nonnegative pseudohermitian sectional curvature, then%
\[
\widetilde{\mathcal{O}}_{d}^{CR}(M)=\widehat{\mathcal{O}}_{d}^{CR}(M).
\]

\end{corollary}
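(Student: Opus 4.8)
The plan is to reduce everything to Proposition \ref{m304}, which already carries the analytic content; what remains is a soft monotonicity argument. Since $\liminf_{r\to+\infty}\frac{M_f(r)}{r^d}\leq\limsup_{r\to+\infty}\frac{M_f(r)}{r^d}$ for any function, the inclusion $\widetilde{\mathcal{O}}_d^{CR}(M)\subseteq\widehat{\mathcal{O}}_d^{CR}(M)$ is automatic, so the entire point is to establish the reverse inclusion $\widehat{\mathcal{O}}_d^{CR}(M)\subseteq\widetilde{\mathcal{O}}_d^{CR}(M)$.

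For that, I would fix an arbitrary $f\in\widehat{\mathcal{O}}_d^{CR}(M)$ and invoke Proposition \ref{m304} to conclude that $\frac{M_f(r)}{r^d}$ is nonincreasing in $r$. Because $M_f(r)=\sup_{x\in B_{cc}(p,r)}|f(x)|\geq 0$, this quotient is bounded below by $0$; a monotone nonincreasing function that is bounded below necessarily has a finite limit as $r\to+\infty$ (equal to its infimum). Hence $\limsup_{r\to+\infty}\frac{M_f(r)}{r^d}=\lim_{r\to+\infty}\frac{M_f(r)}{r^d}<+\infty$, which is precisely the defining condition for membership in $\widetilde{\mathcal{O}}_d^{CR}(M)$. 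Combining the two inclusions gives the asserted equality.

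I do not anticipate a real obstacle here: once Proposition \ref{m304} is granted, the argument is just the elementary fact that a bounded monotone function converges, together with the trivial observation that $M_f$ is nonnegative. The only point requiring a bit of care is directional: Proposition \ref{m304} uses the hypothesis $f\in\widehat{\mathcal{O}}_d^{CR}(M)$ to \emph{produce} the monotonicity of $\frac{M_f(r)}{r^d}$, so one must apply it in exactly that direction, and the nonnegativity of $M_f$ is what upgrades the resulting $\liminf$ bound to a genuine limit and hence to the $\limsup$ bound defining $\widetilde{\mathcal{O}}_d^{CR}(M)$.
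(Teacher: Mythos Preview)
Your proposal is correct and is precisely the deduction the paper has in mind: the paper states only that the corollary follows easily from Proposition~\ref{m304}, and your argument---use Proposition~\ref{m304} to get that $\frac{M_f(r)}{r^d}$ is nonincreasing, then observe it is bounded below by $0$ so the $\limsup$ is finite---is exactly that easy deduction.
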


In addition, we have the asymptotic property for the degree of CR-holomorphic
functions of polynomial growth as follows:

\begin{corollary}
\label{m306}Let $\left(  M,J,\theta \right)  $ be a complete noncompact
pseudohermitian $\left(  2n+1\right)  $-manifold of vanishing torsion with
nonnegative pseudohermitian sectional curvature. If
\[
f\in \widetilde{\mathcal{O}}_{d+\epsilon}^{CR}(M)
\]
for any number $\epsilon>0$, then%
\[
f\in \widetilde{\mathcal{O}}_{d}^{CR}(M).
\]

\end{corollary}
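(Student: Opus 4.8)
The plan is to bootstrap the monotonicity statement of Proposition \ref{m304} by letting the exponent decrease to $d$. First I would note that $\widetilde{\mathcal{O}}_{d+\epsilon}^{CR}(M)\subseteq\widehat{\mathcal{O}}_{d+\epsilon}^{CR}(M)$ directly from the definitions, since a finite $\limsup$ forces a finite $\liminf$; hence the hypothesis yields $f\in\widehat{\mathcal{O}}_{d+\epsilon}^{CR}(M)$ for every $\epsilon>0$. Applying Proposition \ref{m304} with exponent $d+\epsilon$, we conclude that $r\mapsto M_f(r)/r^{d+\epsilon}$ is nonincreasing for each $\epsilon>0$.

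Next, for fixed $0<r_1\le r_2<+\infty$, this monotonicity gives $M_f(r_2)/r_2^{d+\epsilon}\le M_f(r_1)/r_1^{d+\epsilon}$, equivalently $\log M_f(r_2)-\log M_f(r_1)\le(d+\epsilon)\log(r_2/r_1)$. Since $r_1$ and $r_2$ are held fixed while $\epsilon>0$ is arbitrary, I would then send $\epsilon\to0^{+}$ to get $\log M_f(r_2)-\log M_f(r_1)\le d\log(r_2/r_1)$; that is, $M_f(r)/r^{d}$ is itself nonincreasing in $r$. In particular $\limsup_{r\to\infty}M_f(r)/r^{d}\le M_f(r_0)/r_0^{d}<+\infty$ for any fixed $r_0>0$, which is precisely the assertion $f\in\widetilde{\mathcal{O}}_d^{CR}(M)$.

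There is essentially no serious obstacle here; the only point to watch is the legitimacy of the limit $\epsilon\to0^{+}$, which is immediate because the inequality holds for each $\epsilon$ with $r_1,r_2$ fixed and therefore passes to the infimum over $\epsilon$. If one wishes to avoid quoting Proposition \ref{m304}, the same conclusion follows directly from the CR three-circle theorem (Theorem \ref{m302}): picking $\lambda_j\nearrow+\infty$ along which $M_f(\lambda_j)/\lambda_j^{d+\epsilon}$ stays bounded, substituting $r_3=\lambda_j$ into the convexity inequality \eqref{101}, and letting first $j\to\infty$ and then $\epsilon\to0^{+}$, recovers the monotonicity of $M_f(r)/r^{d}$ and hence the result.
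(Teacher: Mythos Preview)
Your proof is correct and follows essentially the same route as the paper: invoke Proposition~\ref{m304} (via the trivial inclusion $\widetilde{\mathcal{O}}_{d+\epsilon}^{CR}(M)\subseteq\widehat{\mathcal{O}}_{d+\epsilon}^{CR}(M)$) to obtain the monotonicity of $M_f(r)/r^{d+\epsilon}$ for each $\epsilon>0$, then pass to the limit $\epsilon\to0^{+}$ to get the monotonicity of $M_f(r)/r^{d}$ and hence $f\in\widetilde{\mathcal{O}}_d^{CR}(M)$. The paper phrases the limiting step as a contradiction argument rather than the direct $\epsilon\to0^{+}$ limit you give, but these are equivalent, and your explicit version is arguably cleaner.
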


\begin{proof}
From Proposition \ref{m304} and Corollary \ref{m305}, we know
\[
\frac{M_{f}\left(  r\right)  }{r^{d+\epsilon}}%
\]
is decreasing with respect to $r$ for any positive number $\epsilon$. By
acontradiction, it's easy to validate the monotonicity of
\[
\frac{M_{f}\left(  r\right)  }{r^{d}}.
\]
Then, by the last two corollaries, we have%
\[
f\in \widetilde{\mathcal{O}}_{d}^{CR}(M).
\]

\end{proof}

\bigskip

From Corollary \ref{m306}, we know

\begin{corollary}
\label{m307}If $\left(  M,J,\theta \right)  $ is a complete noncompact
pseudohermitian $\left(  2n+1\right)  $-manifold of vanishing torsion with
nonnegative pseudohermitian sectional curvature, then%
\[
\mathcal{O}_{d}^{CR}(M)=\widetilde{\mathcal{O}}_{d}^{CR}(M).
\]

\end{corollary}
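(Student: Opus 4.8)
The plan is to chain together the three preceding corollaries. First I would observe that, by definition, every element of $\widetilde{\mathcal{O}}_d^{CR}(M)$ lies in $\mathcal{O}_d^{CR}(M)$, so the inclusion $\widetilde{\mathcal{O}}_d^{CR}(M)\subseteq \mathcal{O}_d^{CR}(M)$ is automatic and requires no curvature hypothesis; indeed this was already noted in the remark following the definition of $\widetilde{\mathcal{O}}_d^{CR}(M)$ and $\widehat{\mathcal{O}}_d^{CR}(M)$. Hence it suffices to prove the reverse inclusion $\mathcal{O}_d^{CR}(M)\subseteq \widetilde{\mathcal{O}}_d^{CR}(M)$ under the assumption of nonnegative pseudohermitian sectional curvature.

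For the reverse inclusion, take $f\in \mathcal{O}_d^{CR}(M)$. By the definition of $\mathcal{O}_d^{CR}(M)$ via $\deg(f)\le d$ in (\ref{203})--(\ref{202}), for every $\epsilon>0$ there is a constant $C$ with $|f(y)|\le C(1+d_{cc}(x,y))^{d+\epsilon}$ for all $y$, so $M_f(r)\le C(1+r)^{d+\epsilon}$ and therefore $\limsup_{r\to+\infty} M_f(r)/r^{d+\epsilon}<+\infty$; that is, $f\in \widetilde{\mathcal{O}}_{d+\epsilon}^{CR}(M)$ for every $\epsilon>0$. Now Corollary \ref{m306} applies verbatim: since $f\in \widetilde{\mathcal{O}}_{d+\epsilon}^{CR}(M)$ for all $\epsilon>0$, we conclude $f\in \widetilde{\mathcal{O}}_d^{CR}(M)$. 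This gives $\mathcal{O}_d^{CR}(M)\subseteq \widetilde{\mathcal{O}}_d^{CR}(M)$, and combined with the trivial inclusion the desired equality follows.

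There is really no hard analytic step here — the substance has all been front-loaded into Lemma \ref{m301}, Theorem \ref{m302}, and the chain Proposition \ref{m304} $\Rightarrow$ Corollary \ref{m305} $\Rightarrow$ Corollary \ref{m306}, each of which rests on the monotonicity of $M_f(r)/r^d$ furnished by the CR three-circle theorem. The only point demanding a touch of care is the passage from $\deg(f)\le d$ to membership in $\widetilde{\mathcal{O}}_{d+\epsilon}^{CR}(M)$ for all $\epsilon>0$: one must use that the infimum in the definition of $\deg(f)$ being $\le d$ does not guarantee the bound with exponent exactly $d$, but does guarantee it with exponent $d+\epsilon$ for each $\epsilon>0$, which is exactly the hypothesis Corollary \ref{m306} is set up to consume. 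So I would present the proof as a two-line deduction: state the trivial inclusion, then invoke Corollary \ref{m306} after the elementary observation about polynomial growth, and conclude.

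\begin{proof}
By the definition of $\widetilde{\mathcal{O}}_{d}^{CR}(M)$, we always have $\widetilde{\mathcal{O}}_{d}^{CR}(M)\subseteq \mathcal{O}_{d}^{CR}(M)$, so it remains to prove the reverse inclusion. Let $f\in \mathcal{O}_{d}^{CR}(M)$, so that $\deg(f)\leq d$. By (\ref{203}), for every $\epsilon>0$ there is a constant $C=C(x,d,f)$ with $|f(y)|\leq C(1+d_{cc}(x,y))^{d+\epsilon}$ for all $y\in M$, whence $M_{f}(r)\leq C(1+r)^{d+\epsilon}$ and thus $\limsup_{r\rightarrow+\infty}M_{f}(r)/r^{d+\epsilon}<+\infty$; that is, $f\in \widetilde{\mathcal{O}}_{d+\epsilon}^{CR}(M)$ for every $\epsilon>0$. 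Corollary \ref{m306} then yields $f\in \widetilde{\mathcal{O}}_{d}^{CR}(M)$. Hence $\mathcal{O}_{d}^{CR}(M)\subseteq \widetilde{\mathcal{O}}_{d}^{CR}(M)$, and the equality follows.
\end{proof}
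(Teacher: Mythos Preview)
Your proof is correct and follows exactly the route the paper takes: the paper simply writes ``From Corollary \ref{m306}, we know'' before stating the corollary, and you have merely unpacked that one-line citation by spelling out the trivial inclusion and the elementary observation that $\deg(f)\le d$ gives $f\in\widetilde{\mathcal{O}}_{d+\epsilon}^{CR}(M)$ for every $\epsilon>0$.
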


As an application, we could recover and generalize the CR sharp dimension
estimate in \cite{chl} under the assumption of nonnegative pseudohermitian
sectional curvature instead of nonnegative pseudohermitian bisectional curvature.

\begin{proof}
(of Theorem \ref{m308}) Suppose on the contrary, i.e.
\[
\dim_{%
\mathbb{C}
}\left(  \mathcal{O}_{d}^{CR}(M)\right)  >\dim_{%
\mathbb{C}
}\left(  \mathcal{O}_{d}^{CR}(\mathbb{H}^{n})\right)
\]
for some positive integer $d\in%
\mathbb{N}
$. Then for any point $p\in M$, there's a nonzero CR-holomorphic function $f$
of polynomial growth of degree at most $d$ with
\[
ord_{p}f\geq d+1
\]
Concerning the existence of such function $f$, one could refer to the proof of
Theorem 1.1 in \cite[(5.17)]{chl} which is just a method of the
Poincar\'{e}-Siegel argument via linear algebra (\cite{m1}). Therefore, we
have%
\[
\underset{r\rightarrow0^{+}}{\lim}\frac{M_{f}\left(  r\right)  }{r^{d}}=0.
\]
However, this contradicts with the monotonicity of the function $\frac
{M_{f}\left(  r\right)  }{r^{d}}$ as in Proposition \ref{m304}. Hence, the
sharp dimension estimate holds. As for the rigidity part, we just claim that
if
\begin{equation}
\dim_{%
\mathbb{C}
}\left(  \mathcal{O}_{d}^{CR}(M)\right)  =\dim_{%
\mathbb{C}
}\left(  \mathcal{O}_{d}^{CR}(\mathbb{H}^{n})\right)  , \label{319}%
\end{equation}
then $\left(  M,J,\theta \right)  $ is CR-isomorphic to $(2n+1)$-dimensional
Heisenberg group $\mathbb{H}^{n}$. From Proposition 4.1 in \cite{t} (or
Theorem 7.15 in \cite{b}), it suffices to show that $M$ has constant
$J$-holomorphic sectional curvature $-3$. From the equation, for any $p\in M$,
$Z\in \left(  T_{1,0}M\right)  _{p}$ with $\left \vert Z\right \vert =1$,%
\[%
\begin{array}
[c]{ccl}%
R^{\theta}\left(  Z,\overline{Z},Z,\overline{Z}\right)  & = & R\left(
Z,\overline{Z},Z,\overline{Z}\right)  +g_{\theta}\left(  \left(
Z\wedge \overline{Z}\right)  Z,\overline{Z}\right)  +2d\theta \left(
Z,\overline{Z}\right)  g_{\theta}\left(  JZ,\overline{Z}\right) \\
& = & R\left(  Z,\overline{Z},Z,\overline{Z}\right)  -3,
\end{array}
\]
the proof of the rigidity part is completed if we justify that the
pseudohermitian sectional curvature vanishes. Adopting the notations as in
Theorem \ref{m302}, we just claim that, for simplification,%
\begin{equation}
R\left(  Z_{1},\overline{Z}_{1},Z_{1},\overline{Z}_{1}\right)  \left(
p\right)  =0 \label{320}%
\end{equation}
where
\[
Z_{1}=\frac{\partial}{\partial z_{1}}-\theta \left(  \frac{\partial}{\partial
z_{1}}\right)  T.
\]
The equality (\ref{319}) enables us to see that there is a function
$f\in \mathcal{O}_{d}^{CR}(M)$ such that%
\[
f\left(  z_{1},...,z_{n},x\right)  =z_{1}^{d}+O\left(  r^{d+1}\right)
\]
locally. This impies that
\[
ord_{p}f=d.
\]
Therefore, from Corollary \ref{m310} and Proposition \ref{m304}, we obtain%
\[
\frac{M_{f}\left(  r\right)  }{r^{d}}%
\]
is constant. In the proofs of Theorem \ref{m302} and Lemma \ref{m301}, it's
not difficult to find that $G-F$ attains the maximum value $0$ on $\partial
B\left(  p,r\right)  $ at the point $q\left(  r\right)  $ for any positive
number $r$ and then
\[
R\left(  \nabla_{b}r,J\nabla_{b}r,\nabla_{b}r,J\nabla_{b}r\right)  \left(
q\left(  r\right)  \right)  =0.
\]
From the definition of the chosen function $f$, we could take a subsequence
$\left \{  \left(  \nabla_{b}r\right)  \left(  q\left(  r_{j}\right)  \right)
\right \}  _{j\in%
\mathbb{N}
}$ such that its limit, as $r\rightarrow0^{+}$, lies in the tangent space at
$p$ spanned by $\frac{\partial}{\partial z_{1}}|_{p}$ and $T|_{p}$. Then the
equality (\ref{320}) holds by the formula (\ref{324}). Accordingly, this
theorem is accomplished.
\end{proof}

\section{An extension of CR Three-Circle Theorem}

Subsequently, we will give the proof of the CR three-circle theorem when the
pseudohermitian sectional curvature is bounded from below by a function.

\begin{proof}
(of Theorem \ref{m303}) Although this proof is similar to the one of the CR
three-circle theorem, we give its proof for completeness. Here we adopt the
same notations as in the proof of Theorem \ref{m302}. Define%
\[
F\left(  x\right)  =\left(  h\left(  r_{3}\right)  -h\left(  r\left(
x\right)  \right)  \right)  \log M_{f}\left(  r_{1}\right)  +\left(  h\left(
r\left(  x\right)  \right)  -h\left(  r_{1}\right)  \right)  \log M_{f}\left(
r_{3}\right)
\]
and%
\[
G\left(  x\right)  =\left(  h\left(  r_{3}\right)  -h\left(  r_{1}\right)
\right)  \log \left \vert f\left(  x\right)  \right \vert
\]
on the annulus $A\left(  p;r_{1},r_{3}\right)  $ for $0<r_{1}<r_{3}<+\infty$.
We still assume that
\begin{equation}
M_{f}\left(  r_{1}\right)  <M_{f}\left(  r_{3}\right)  . \label{309}%
\end{equation}
It's clear that $G\leq F$ on the boundary $\partial A\left(  p;r_{1}%
,r_{3}\right)  $ by (\ref{307}). It suffices to show that%
\[
G\leq F
\]
on the annulus $A\left(  p;r_{1},r_{3}\right)  $ to reach our first conclusion
by the maximum principle. Suppose that $G\left(  x\right)  >F\left(  x\right)
$ for some interior point $x$ in $A\left(  p;r_{1},r_{3}\right)  $, then we
could choose a point $q\in A\left(  p;r_{1},r_{3}\right)  $ such that the
function $\left(  G-F\right)  $ attains the maximum value at $q$.

If $q\notin Cut\left(  p\right)  $, then
\begin{equation}
\left(  G-F\right)  _{1\overline{1}}\left(  q\right)  \leq0. \label{312}%
\end{equation}
With the same deduction in Theorem \ref{m302}, we have
\[
G_{1\overline{1}}\left(  q\right)  =0
\]
from the Poincar\'{e}-Lelong equation and the fact that $f\left(  q\right)
\neq0$ with the help of the transverse K\"{a}hler structure. Due to the fact
that $h\left(  r\right)  \thicksim \log r$ as $r\rightarrow0^{+}$, we could
define
\[
F_{\epsilon}\left(  x\right)  =a_{\epsilon}\log \left(  e^{h\left(  r\right)
}-\epsilon \right)  +b_{\epsilon}%
\]
for any sufficiently small number $\epsilon>0$ and the two constants
$a_{\epsilon}$ and $b_{\epsilon}$ are restricted by the following equations%
\[
F_{\epsilon}\left(  r_{j}\right)  =\left(  h\left(  r_{3}\right)  -h\left(
r_{1}\right)  \right)  \log M_{f}\left(  r_{j}\right)
\]
for $j=1,3$. It's obvious that $F_{\epsilon}\rightarrow F$ on the annulus
$A\left(  p;r_{1},r_{3}\right)  $ as $\epsilon \rightarrow0^{+}$. Due to the
inequality (\ref{307}) and the assumption (\ref{309}), we see that
$a_{\epsilon}>0$. Denote by $q_{\epsilon}$ a maximum point in $A\left(
p;r_{1},r_{3}\right)  $ of the function $\left(  G-F_{\epsilon}\right)  $ and
modify the point $q$ into the point $q_{\epsilon}$. From (\ref{306}),
(\ref{310}), (\ref{305}), and the initial condition $u\left(  r\right)
\thicksim \frac{1}{2r}$ as $r\rightarrow0^{+}$ imply that%
\begin{equation}
r_{1\overline{1}}\leq u\left(  r\right)  . \label{311}%
\end{equation}
By the hypotheses (\ref{307}), (\ref{308}) and the inequality (\ref{311}), we
get%
\[%
\begin{array}
[c]{l}%
\left(  \log \left(  e^{h\left(  r\right)  }-\epsilon \right)  \right)
_{1\overline{1}}\left(  q_{\epsilon}\right) \\
=\frac{-\epsilon e^{h}\left(  h^{\prime}\right)  ^{2}\left \vert r_{1}%
\right \vert ^{2}+\left(  e^{h\left(  r\right)  }-\epsilon \right)  \left(
e^{h}h^{\prime \prime}\left \vert r_{1}\right \vert ^{2}+e^{h}h^{\prime
}r_{1\overline{1}}\right)  }{\left(  e^{h\left(  r\right)  }-\epsilon \right)
^{2}}\\
\leq-\frac{\epsilon e^{h}\left(  h^{\prime}\right)  ^{2}}{2\left(  e^{h\left(
r\right)  }-\epsilon \right)  ^{2}}\\
<0
\end{array}
\]
for sufficiently small positive number $\epsilon.$ It yields that
\[
\left(  G-F_{\epsilon}\right)  _{1\overline{1}}\left(  q_{\epsilon}\right)
>0.
\]
This contradicts with (\ref{312}). Therefore we obtain%
\[
G\leq F_{\epsilon}%
\]
for sufficeintly small number $\epsilon>0$. If $q\in Cut\left(  p\right)  $,
then, by the trick of Calabi again, let $\epsilon_{1}\in \left(  0,\epsilon
\right)  $ and the point $p_{1}$ lying on the minimal $D$-geodesic from $p$ to
$q_{\epsilon}$ with $d\left(  p,p_{1}\right)  =\epsilon_{1}$. Set%
\[
\widehat{r}\left(  x\right)  =d\left(  p_{1},x\right)
\]
and consider the modified function $F_{\epsilon,\epsilon_{1}}\left(  x\right)
$ of the function $F_{\epsilon}\left(  x\right)  $%
\[
F_{\epsilon,\epsilon_{1}}\left(  x\right)  =a_{\epsilon}\log \left(
e^{h\left(  \widehat{r}+\epsilon_{1}\right)  }-\epsilon \right)  +b_{\epsilon
}\text{.}%
\]
Due to the monotonicity of the function $h$, we see that $F_{\epsilon}\leq
F_{\epsilon,\epsilon_{1}}$. It's clear that $F_{\epsilon}\left(  q_{\epsilon
}\right)  =F_{\epsilon,\epsilon_{1}}\left(  q_{\epsilon}\right)  $. So the
point $q_{\epsilon}$ is still a maximum point of $\left(  G-F_{\epsilon
,\epsilon_{1}}\right)  $. Set
\[
\widehat{Z}_{1}=\frac{1}{\sqrt{2}}\left(  \nabla_{b}\widehat{r}-iJ\nabla
_{b}\widehat{r}\right)  .
\]
By observing the expansion of $\left(  \log \left(  e^{h\left(  \widehat
{r}+\epsilon_{1}\right)  }-\epsilon \right)  \right)  _{\widehat{1}%
\overline{\widehat{1}}}\left(  q_{\epsilon}\right)  $, (\ref{307}),
(\ref{308}), and the continuity of the pseudohermitian sectional curvature
imply that
\[
\left(  F_{\epsilon,\epsilon_{1}}\right)  _{\widehat{1}\overline{\widehat{1}}%
}\left(  q_{\epsilon}\right)  <0
\]
for sufficiently small $\epsilon_{1}>0$ for fixed $\epsilon$. Here the
property that the pseudohermitian sectional curvature is continuous is
utilized to obtain the estimate%
\[
\widehat{r}_{\widehat{1}\overline{\widehat{1}}}\leq u+\epsilon^{\prime}%
\]
for small positive error $\epsilon^{\prime}=\epsilon^{\prime}\left(
\epsilon_{1}\right)  $. \ Then $\left(  G-F_{\epsilon,\epsilon_{1}}\right)
_{\widehat{1}\overline{\widehat{1}}}\left(  q_{\epsilon}\right)  >0.$ However,
it contradicts with the fact that $\left(  G-F_{\epsilon,\epsilon_{1}}\right)
$ attains a maximum point at $q_{\epsilon}$. Accordingly, the inequality%
\[
G\leq F_{\epsilon,\epsilon_{1}}%
\]
holds. Letting $\epsilon_{1}\rightarrow0^{+}$, then $\epsilon \rightarrow0^{+}%
$, we have
\[
G\leq F
\]
on the annulus $A\left(  p;r_{1},r_{3}\right)  $. \ Because $ord_{p}\left(
f\right)  =d$ and $h\left(  r\right)  \thicksim \log r$ as $r\rightarrow0^{+}$,
then we have, for any $\epsilon>0$,%
\[
\log M_{f}\left(  r_{1}\right)  \leq \log M_{f}\left(  r_{2}\right)  +\left(
d-\epsilon \right)  \left(  h\left(  r_{1}\right)  -h\left(  r_{2}\right)
\right)
\]
for sufficiently small positive number $r_{1}$ and $r_{1}<r_{2}$. By the
convexity of $\log M_{f}\left(  r\right)  $ with respect to the function
$h\left(  r\right)  $%
\[
\log M_{f}\left(  r\right)  \leq \frac{h\left(  r_{2}\right)  -h\left(
r\right)  }{h\left(  r_{2}\right)  -h\left(  r_{1}\right)  }\log M_{f}\left(
r_{1}\right)  +\frac{h\left(  r\right)  -h\left(  r_{1}\right)  }{h\left(
r_{2}\right)  -h\left(  r_{1}\right)  }\log M_{f}\left(  r_{2}\right)
\]
for $r_{1}\leq r\leq r_{2}$, we obtain the monotonicity of $\frac{M_{f}\left(
r\right)  }{\exp \left(  dh\left(  r\right)  \right)  }$. This completes the proof.
\end{proof}

\bigskip

Choosing the functions $g\left(  r\right)  =-1$, $u\left(  r\right)
=\frac{\left(  e^{2r}+1\right)  }{2\left(  e^{2r}-1\right)  }$, and $h\left(
r\right)  =\log \frac{e^{r}-1}{e^{r}+1}$ in Theorem \ref{m303}, we have the
following consequence:

\begin{corollary}
If $\left(  M,J,\theta \right)  $ is a complete noncompact pseudohermitian
$\left(  2n+1\right)  $-manifold of vanishing torsion with the pseudohermitian
sectional curvature bounded from below by $-1$, $f\in O^{CR}\left(  M\right)
$, then $\log M_{f}\left(  r\right)  $ is convex with respect to the function
$\log \frac{e^{r}-1}{e^{r}+1}$. In particular, $\frac{M_{f}\left(  r\right)
}{\left(  \frac{e^{r}-1}{e^{r}+1}\right)  ^{d}}$ is increasing for
$ord_{p}\left(  f\right)  =d$.
\end{corollary}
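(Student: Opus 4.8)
The plan is to recognize that the three functions prescribed in the statement are simply hyperbolic-trigonometric expressions tailored to the constant lower curvature bound $-1$, and then to check, one hypothesis at a time, that they satisfy every requirement of Theorem \ref{m303}; once that verification is done the corollary is immediate. First I would record the dictionary
\[
g(r)\equiv-1,\qquad u(r)=\frac{e^{2r}+1}{2(e^{2r}-1)}=\tfrac12\coth r,\qquad h(r)=\log\frac{e^{r}-1}{e^{r}+1}=\log\tanh\tfrac r2 .
\]
With this rewriting, the curvature hypothesis (\ref{305}), namely $R_{1\overline{1}1\overline{1}}(x)\geq g(r(x))$ along the unit $(1,0)$-vector $Z_{1}=\frac{1}{\sqrt{2}}(\nabla_{b}r-iJ\nabla_{b}r)$, is precisely the assumption that the pseudohermitian sectional curvature is bounded below by $-1$.

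Next I would verify the two differential inequalities together with their initial normalizations. For (\ref{306}), using $u'(r)=-\frac{1}{2\sinh^{2}r}$ and the identity $\coth^{2}r-\frac{1}{\sinh^{2}r}=1$ one finds
\[
2u^{2}+u'+\tfrac g2=\tfrac12\coth^{2}r-\tfrac1{2\sinh^{2}r}-\tfrac12=0\geq0,
\]
so (\ref{306}) holds, in fact with equality, while $\coth r\sim 1/r$ as $r\to0^{+}$ gives the required $u(r)\sim\frac{1}{2r}$. For $h$ one computes $h'(r)=\frac{1}{\sinh r}>0$, which is (\ref{307}), and $h''(r)=-\frac{\cosh r}{\sinh^{2}r}$, whence
\[
\tfrac12 h''+h'u=-\frac{\cosh r}{2\sinh^{2}r}+\frac{1}{\sinh r}\cdot\tfrac12\coth r=0\leq0,
\]
so (\ref{308}) also holds (again with equality); finally $\tanh\frac r2\sim\frac r2$ yields $h(r)\sim\log r$ as $r\to0^{+}$, the additive constant $-\log2$ being irrelevant both for this asymptotic normalization and for convexity with respect to $h$.

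Having checked all the hypotheses, I would simply invoke Theorem \ref{m303}: it gives at once that $\log M_{f}(r)$ is convex with respect to $h(r)=\log\frac{e^{r}-1}{e^{r}+1}$ for every $f\in\mathcal{O}^{CR}(M)$, and that $M_{f}(r)/\exp(dh(r))=M_{f}(r)/\big(\frac{e^{r}-1}{e^{r}+1}\big)^{d}$ is increasing in $r$ whenever $ord_{p}(f)=d$. I do not expect any genuine obstacle here; the only point deserving mild care is confirming the two small-$r$ asymptotics $u\sim\frac1{2r}$ and $h\sim\log r$ that Theorem \ref{m303} uses to pin down the initial data in its ODE comparison, and these are immediate from the standard expansions of $\coth$ and $\tanh$. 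It is worth emphasizing that (\ref{306}) and (\ref{308}) are met as equalities, which signals that $u$ and $h$ are exactly the model quantities attached to the constant lower bound $-1$, so the monotone quantity $M_{f}(r)/\big(\frac{e^{r}-1}{e^{r}+1}\big)^{d}$ should be sharp.
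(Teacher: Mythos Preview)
Your proposal is correct and follows exactly the approach indicated in the paper, which simply specifies the triple $g(r)=-1$, $u(r)=\frac{e^{2r}+1}{2(e^{2r}-1)}$, $h(r)=\log\frac{e^{r}-1}{e^{r}+1}$ and appeals to Theorem~\ref{m303}. Your explicit verification of (\ref{306})--(\ref{308}) and of the asymptotics $u\sim\frac{1}{2r}$, $h\sim\log r$ (with the harmless additive constant $-\log 2$) is precisely what the paper leaves implicit.
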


Similarly, choosing the functions $g\left(  r\right)  =1$, $u\left(  r\right)
=\frac{1}{2}\cot r$, and $h\left(  r\right)  =\log \tan \frac{r}{2}$ in Theorem
\ref{m303}, we obtain

\begin{corollary}
If $\left(  M,J,\theta \right)  $ is a complete noncompact pseudohermitian
$\left(  2n+1\right)  $-manifold of vanishing torsion with the pseudohermitian
sectional curvature bounded from below by $1$, $f\in O^{CR}\left(  B\left(
p,R\right)  \right)  $, then $\log M_{f}\left(  r\right)  $ is convex with
respect to the function $\log \tan \frac{r}{2}$. In particular, $\frac
{M_{f}\left(  r\right)  }{\left(  \tan \frac{r}{2}\right)  ^{d}}$ is increasing
for $ord_{p}\left(  f\right)  =d$.
\end{corollary}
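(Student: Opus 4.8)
The plan is to obtain the corollary as a direct specialization of Theorem~\ref{m303}, feeding in the explicit profile functions $g(r)\equiv 1$, $u(r)=\tfrac12\cot r$ and $h(r)=\log\tan\tfrac r2$ on the interval $(0,\pi)$, and then verifying that every hypothesis of that theorem holds on $B(p,R)$ (so implicitly $R\le\pi$, the range on which $\cot r$, $\csc r$ and $\tan\tfrac r2$ are smooth and positive). The curvature hypothesis $(\ref{305})$, namely $R_{1\overline{1}1\overline{1}}(x)\ge g(r(x))=1$, is precisely the standing assumption; and the normalizations $u(r)\sim\tfrac1{2r}$, $h(r)\sim\log r$ as $r\to0^{+}$ follow from $\cot r\sim 1/r$ and $\tan\tfrac r2\sim\tfrac r2$ near the origin (the latter only up to the harmless additive constant $-\log2$, which is all the proof of Theorem~\ref{m303} actually uses).

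It remains to check the two differential inequalities $(\ref{306})$, $(\ref{308})$ and the monotonicity $(\ref{307})$, and for the chosen functions these turn out to be equalities. Using $\csc^{2}r=1+\cot^{2}r$ I compute $u'(r)=-\tfrac12\csc^{2}r=-\tfrac12-\tfrac12\cot^{2}r$ and $2u(r)^{2}=\tfrac12\cot^{2}r$, so $2u^{2}+u'+\tfrac g2=\tfrac12\cot^{2}r-\tfrac12-\tfrac12\cot^{2}r+\tfrac12=0\ge0$, which is $(\ref{306})$. From the half-angle identity, $h'(r)=\dfrac{\tfrac12\sec^{2}\tfrac r2}{\tan\tfrac r2}=\dfrac1{2\sin\tfrac r2\cos\tfrac r2}=\csc r>0$ on $(0,\pi)$, giving $(\ref{307})$; then $h''(r)=-\csc r\cot r$, so $\tfrac12 h''+h'u=-\tfrac12\csc r\cot r+\tfrac12\csc r\cot r=0\le0$, which is $(\ref{308})$.

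Once these are in place, Theorem~\ref{m303} applies word for word on $B(p,R)$: its proof is a maximum-principle argument on annuli $A(p;r_{1},r_{3})\subset B(p,R)$, hence entirely local and requiring no global structure on $M$. I would then read off the two conclusions directly: $\log M_{f}(r)$ is convex with respect to $h(r)=\log\tan\tfrac r2$ for $f\in O^{CR}(B(p,R))$, and when $ord_{p}(f)=d$ the quotient $\dfrac{M_{f}(r)}{\exp(d\,h(r))}=\dfrac{M_{f}(r)}{(\tan\tfrac r2)^{d}}$ is increasing in $r$.

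The main point is that there is essentially no obstacle here: every inequality one must check is saturated, exactly as one expects from the comparison with the round sphere of constant curvature $1$, and the work reduces to elementary trigonometric identities. The only thing worth remarking is the bookkeeping about domains: since $\cot r$ and $\csc r$ blow up at $r=\pi$, the profile functions live only on $(0,\pi)$, which is why---unlike the companion corollary for curvature bounded below by $-1$, where $\tfrac12\coth r$ and $\log\tanh\tfrac r2$ are defined on all of $(0,\infty)$---the present statement is phrased for CR-holomorphic functions on a ball rather than on all of $M$.
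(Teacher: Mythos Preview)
Your proposal is correct and is exactly the paper's approach: the paper simply states ``choosing the functions $g(r)=1$, $u(r)=\frac{1}{2}\cot r$, and $h(r)=\log\tan\frac{r}{2}$ in Theorem~\ref{m303}'' and records the corollary without further comment. Your explicit verification of the differential identities $(\ref{306})$--$(\ref{308})$ and the asymptotics at $r=0$ fills in the routine details the paper omits, and your remark about the domain restriction $R\le\pi$ is a useful clarification.
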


With the help of Theorem \ref{m303}, we have the dimension estimate when the
pseudohermitian sectional curvature is asymptotically nonnegative.

\begin{proof}
(of Theorem \ref{m309}) Although the proof is almost the same as in
\cite{liu1}, we give its proof for completeness. May assume $\epsilon<\frac
{1}{2}$. Choose
\[
u\left(  r\right)  =\frac{1}{2r}+\frac{A}{\left(  1+r\right)  ^{1+\epsilon}};
\]
hence, the inequality holds%
\[
2u^{2}+u^{\prime}-\frac{1}{2}\frac{A}{\left(  1+r\right)  ^{2+\epsilon}}%
\geq0.
\]
Suppose $h\left(  r\right)  $ is the solution to the equation%
\[
\left \{
\begin{array}
[c]{l}%
\frac{1}{2}h^{\prime \prime}+h^{\prime}u=0,\\
\underset{r\rightarrow0^{+}}{\lim}\frac{\exp \left(  h\left(  r\right)
\right)  }{r}=1,
\end{array}
\right.
\]
then
\[
h^{\prime}\left(  r\right)  =\frac{\exp \left(  \frac{2A}{\epsilon \left(
1+r\right)  ^{\epsilon}}\right)  }{r}\exp \left(  -\frac{2A}{\epsilon}\right)
.
\]
Accordingly,
\[
h\left(  r\right)  \geq \exp \left(  -\frac{2A}{\epsilon}\right)  \log r+C
\]
for any number $r\geq1$. Here $C=C\left(  A,\epsilon \right)  $. Theorem
\ref{m303} implies that if the vanishing order $ord_{p}\left(  f\right)  $ of
$f\in \mathcal{O}^{CR}\left(  M\right)  $ at $p$ is equal to $d$, then
$\frac{M_{f}\left(  r\right)  }{\exp \left(  dh\left(  r\right)  \right)  }$ is
increasing with respect to $r$. So%
\begin{equation}
M_{f}\left(  r\right)  \geq \exp \left(  dh\left(  r\right)  \right)
\underset{s\rightarrow0^{+}}{\lim}\frac{M_{f}\left(  s\right)  }{\exp \left(
dh\left(  s\right)  \right)  }\geq C_{1}r^{d\exp \left(  -\frac{2A}{\epsilon
}\right)  } \label{318}%
\end{equation}
where
\[
C_{1}=\exp \left(  Cd\right)  \underset{s\rightarrow0^{+}}{\lim}\frac
{M_{f}\left(  s\right)  }{\exp \left(  dh\left(  s\right)  \right)  }.
\]
Consider the Poincar\'{e}-Siegel map%
\[%
\begin{array}
[c]{c}%
\Phi:\mathcal{O}_{d}^{CR}\left(  M\right)  \longrightarrow%
\mathbb{C}
^{q\left(  \left[  d\exp \left(  \frac{2A}{\epsilon}\right)  \right]  \right)
}\\
\text{ \  \  \  \  \  \  \  \  \  \  \  \  \  \  \  \  \  \  \  \  \  \  \  \  \ }f\longmapsto \left(
D^{\alpha}f\right)  _{\left \vert \alpha \right \vert \leq \left[  d\exp \left(
\frac{2A}{\epsilon}\right)  \right]  }%
\end{array}
\]
where $q\left(  m\right)  =\binom{n+m}{n}$ for any $m\in%
\mathbb{N}
$ and $\left[  a\right]  $ denotes the greatest integer less than or equal to
$a$. We would claim that $\Phi$ is injective; for if $0\neq f\in
\mathcal{O}_{d}^{CR}\left(  M\right)  $ and $D^{\alpha}f=0$ for any
$\left \vert \alpha \right \vert \leq d^{\prime}=\left[  d\exp \left(  \frac
{2A}{\epsilon}\right)  \right]  $, then
\[
ord_{p}\left(  f\right)  \geq d^{\prime}+1.
\]
Hence, by (\ref{318}), we obtain%
\[
M_{f}\left(  r\right)  \geq C_{1}r^{\left(  1+d^{\prime}\right)  \exp \left(
-\frac{2A}{\epsilon}\right)  };
\]
however, this contradicts with the fact%
\[
f\in \mathcal{O}_{d}^{CR}\left(  M\right)  .
\]
Therefore we have the dimension estimate%
\[
\dim_{%
\mathbb{C}
}\left(  \mathcal{O}_{d}^{CR}\left(  M\right)  \right)  \leq \dim_{%
\mathbb{C}
}\left(  \mathcal{O}_{\left[  d\exp \left(  \frac{2A}{\epsilon}\right)
\right]  }^{CR}\left(  H^{n}\right)  \right)  =C\left(  \epsilon,A\right)
d^{n}%
\]
for any $d\in%
\mathbb{N}
$. If $d\leq e^{-\frac{3A}{\epsilon}}$, then
\[
\dim_{%
\mathbb{C}
}\left(  \mathcal{O}_{d}^{CR}\left(  M\right)  \right)  \leq \dim_{%
\mathbb{C}
}\left(  \mathcal{O}_{d\exp \left(  \frac{2A}{\epsilon}\right)  }\left(
\mathbb{C}
^{n}\right)  \right)  \leq \dim_{%
\mathbb{C}
}\left(  \mathcal{O}_{\exp \left(  -\frac{A}{\epsilon}\right)  }\left(
\mathbb{C}
^{n}\right)  \right)  =1.
\]
Last, if $\frac{A}{\epsilon}\leq \frac{1}{4d}$, then
\[
d\exp \left(  \frac{2A}{\epsilon}\right)  <d+1
\]
and%
\[
\dim_{%
\mathbb{C}
}\left(  \mathcal{O}_{d}^{CR}\left(  M\right)  \right)  \leq \dim_{%
\mathbb{C}
}\left(  \mathcal{O}_{d}^{CR}\left(  \mathbb{H}^{n}\right)  \right)  .
\]
This theorem is accomplished.
\end{proof}

\bigskip

It's not difficult to observe that Theorem \ref{m309} includes the case when
the pseudohermitian sectional curvature is nonnegative outside a compact set
as follows:

\begin{corollary}
Let $\left(  M,J,\theta \right)  $ is a complete noncompact pseudohermitian
$\left(  2n+1\right)  $-manifold of vanishing torsion of which the
pseudohermitian sectional curvature is nonnegative outside a compact subset
$K$ and is bounded from below by $-a$ for some $a>0$ on $M$. If $\lambda
=a\left(  d_{c}\left(  K\right)  \right)  ^{2}$ where $d_{c}\left(  K\right)
$ denotes the diameter of $K$, then there is a positive constant $C\left(
\lambda,n\right)  $ such that
\[
\dim_{%
\mathbb{C}
}\left(  \mathcal{O}_{d}^{CR}\left(  M\right)  \right)  \leq C\left(
\lambda,n\right)  d^{n}%
\]
for any positive integer $d$. For any $d\in%
\mathbb{N}
$, there is a positive number $\epsilon \left(  d\right)  $ such that if
$\lambda \leq \epsilon \left(  d\right)  $, then we have
\[
\dim_{%
\mathbb{C}
}\left(  \mathcal{O}_{d}^{CR}\left(  M\right)  \right)  \leq \dim_{%
\mathbb{C}
}\left(  \mathcal{O}_{d}^{CR}\left(  \mathbb{H}^{n}\right)  \right)  .
\]
Furthermore, there exists a number $\delta \left(  \lambda \right)  >0$ such
that
\[
\dim_{%
\mathbb{C}
}\left(  \mathcal{O}_{\delta \left(  \lambda \right)  }^{CR}\left(  M\right)
\right)  =1.
\]

\end{corollary}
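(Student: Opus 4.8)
The plan is to deduce this corollary from Theorem \ref{m303} together with the Poincar\'{e}--Siegel linear-algebra argument already used in the proof of Theorem \ref{m309}. Since the space $\mathcal{O}_{d}^{CR}(M)$ does not depend on the choice of base point, I would first fix a point $p\in K$. Then whenever $r(x)=d_{cc}(p,x)>d_{c}(K)$ one has $x\notin K$ (otherwise $d_{cc}(p,x)\leq d_{c}(K)$), so the radial pseudohermitian sectional curvature $R_{1\overline{1}1\overline{1}}(x)$, with $Z_{1}=\tfrac{1}{\sqrt{2}}(\nabla_{b}r-iJ\nabla_{b}r)$, is nonnegative there; everywhere on $M$ it is $\geq-a$. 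Hence hypothesis (\ref{305}) of Theorem \ref{m303} holds with any $g\in C^{0}([0,\infty))$ satisfying $g\equiv-a$ on $[0,d_{c}(K)]$, $g\equiv0$ on $[d_{c}(K)+1,\infty)$ and $-a\leq g\leq0$; this is a routine smoothing of a step function, and enlarging its support only weakens the bound without affecting the asymptotics below.

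The technical core is to produce the comparison functions $u,h$ required by Theorem \ref{m303} and to read off the correct polynomial exponent from $h$. I would use the Jacobi-field substitution: let $\psi$ solve $\psi''+g\psi=0$ with $\psi(0)=0$, $\psi'(0)=1$, so $\psi(r)=a^{-1/2}\sinh(\sqrt{a}\,r)$ on $[0,d_{c}(K)]$ and $\psi$ is affine and increasing for large $r$; since $\sqrt{a}\,d_{c}(K)=\sqrt{\lambda}$, one gets $\psi(r)\thicksim(\cosh\sqrt{\lambda})\,r$ as $r\to+\infty$. Set $u:=\psi'/(2\psi)$ and $h(r):=\log r+\int_{0}^{r}\big(\psi(s)^{-1}-s^{-1}\big)\,ds$. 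Because $g$ is continuous, $\psi\in C^{2}$ and $\psi>0$ for $r>0$, so $u\in C^{1}(\mathbb{R}^{+})$ with $u\thicksim\tfrac{1}{2r}$ near $0$, while $2u^{2}+u'+\tfrac{g}{2}=\tfrac{\psi''+g\psi}{2\psi}=0$, so (\ref{306}) holds; likewise $h\in C^{1}(\mathbb{R}^{+})$, $h'=1/\psi>0$, $\tfrac12h''+h'u=0$, and $h\thicksim\log r$ near $0$, so (\ref{307}) and (\ref{308}) hold. A short computation with the affine tail of $\psi$ then gives $h(r)\geq(\cosh\sqrt{\lambda})^{-1}\log r-C_{0}$ for all large $r$, where $C_{0}$ may depend on $a$ and $d_{c}(K)$ but the exponent $(\cosh\sqrt{\lambda})^{-1}$ depends only on $\lambda$.

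With $u$ and $h$ in hand, Theorem \ref{m303} applies: for any $f\in\mathcal{O}^{CR}(M)$ with $ord_{p}(f)=k$ the ratio $M_{f}(r)/\exp(kh(r))$ is nondecreasing, hence $M_{f}(r)\geq C_{f}\exp(kh(r))\geq C_{f}'\,r^{\,k/\cosh\sqrt{\lambda}}$ for large $r$ with $C_{f}'>0$. Now I would run the Poincar\'{e}--Siegel map exactly as in the proof of Theorem \ref{m309} (see also \cite{chl}, \cite{m1}): put $m:=\lfloor d\cosh\sqrt{\lambda}\rfloor$ and $q(m)=\binom{n+m}{n}$, and consider the linear map $\Phi:\mathcal{O}_{d}^{CR}(M)\to\mathbb{C}^{q(m)}$, $f\mapsto(D^{\alpha}f)_{|\alpha|\leq m}$. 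If $0\neq f\in\ker\Phi$ then $ord_{p}(f)\geq m+1>d\cosh\sqrt{\lambda}$, so $M_{f}(r)\geq C'\,r^{(m+1)/\cosh\sqrt{\lambda}}$ with exponent strictly larger than $d$, contradicting $f\in\mathcal{O}_{d}^{CR}(M)$; thus $\Phi$ is injective and $\dim_{\mathbb{C}}\mathcal{O}_{d}^{CR}(M)\leq\binom{n+\lfloor d\cosh\sqrt{\lambda}\rfloor}{n}\leq(\cosh\sqrt{\lambda}+n)^{n}\,d^{n}=:C(\lambda,n)\,d^{n}$. For the second assertion, if $\lambda$ is so small that $d\cosh\sqrt{\lambda}<d+1$ (which defines a positive threshold $\epsilon(d)$), then $\lfloor d\cosh\sqrt{\lambda}\rfloor=d$, so $\dim_{\mathbb{C}}\mathcal{O}_{d}^{CR}(M)\leq q(d)=\dim_{\mathbb{C}}\mathcal{O}_{d}^{CR}(\mathbb{H}^{n})$. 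For the third, take $\delta(\lambda):=\tfrac12(\cosh\sqrt{\lambda})^{-1}>0$; then $\lfloor\delta(\lambda)\cosh\sqrt{\lambda}\rfloor=0$, so $\Phi$ embeds $\mathcal{O}_{\delta(\lambda)}^{CR}(M)$ into $\mathbb{C}$, and since constants already give dimension $1$ we conclude $\dim_{\mathbb{C}}\mathcal{O}_{\delta(\lambda)}^{CR}(M)=1$.

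The main obstacle I anticipate is the middle step: constructing $u$ and $h$ with exactly the regularity and initial behaviour demanded by Theorem \ref{m303}, and, above all, verifying that a negative-curvature region of diameter $d_{c}(K)$ and depth $a$ forces the growth exponent to be controlled precisely by $\lambda=a\,d_{c}(K)^{2}$ through the factor $\cosh\sqrt{\lambda}$. This is the only place a genuine (though elementary) ODE computation enters, and it is also where the $C^{0}$-versus-$C^{1}$ smoothing of $g$ must be handled so that Theorem \ref{m303} is literally applicable; everything downstream is identical to the argument in the proof of Theorem \ref{m309}.
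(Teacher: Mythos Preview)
Your approach is correct and reaches the stated conclusions, but it takes a different route from the paper. The paper simply records the corollary as a special case of Theorem \ref{m309}: after rescaling the contact form so that $d_{c}(K)=1$ (which leaves $\dim_{\mathbb{C}}\mathcal{O}_{d}^{CR}(M)$ unchanged), the curvature is $\geq-\lambda$ on a set of diameter $1$ and nonnegative outside it, so for any fixed $\epsilon\in(0,\tfrac12)$ one may take $A=\lambda\cdot2^{2+\epsilon}$ and read off all three assertions directly from Theorem \ref{m309}, with constants governed by $A/\epsilon$ and hence by $\lambda$ alone. You instead bypass Theorem \ref{m309} and return to the underlying Theorem \ref{m303} with a bespoke comparison pair $(u,h)$ built from the Jacobi equation $\psi''+g\psi=0$. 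This is more work but yields the explicit growth factor $\cosh\sqrt{\lambda}$ in place of the paper's less transparent $\exp(2A/\epsilon)$, and it makes visible that only the \emph{radial} pseudohermitian sectional curvature bound is actually used.

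One point to tighten: your smoothing interval $[d_{c}(K),d_{c}(K)+1]$ is not scale-invariant, so the phrase ``without affecting the asymptotics below'' is not quite right. On that transition interval $\psi''=-g\psi\leq a\psi$, whence the terminal slope of $\psi$ is only bounded by $\cosh\bigl(\sqrt{a}\,(d_{c}(K)+1)\bigr)=\cosh(\sqrt{\lambda}+\sqrt{a})$, which depends on $a$ separately and not just on $\lambda$. The remedy is immediate---smooth over $[d_{c}(K),(1+\eta)d_{c}(K)]$ and let $\eta\to0^{+}$, so the slope tends to $\cosh\sqrt{\lambda}$ as you claim---and nothing downstream changes.
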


\bigskip

\end{document}